\newenvironment{theorembis}[2][]
  {%
   \addtocounter{theorem}{-1}%
   \begin{theorem}[#1]\label{#2 bis}}
  {\end{theorem}}
\newcommand*{\Crefp}[2]{\nameCref{#2}~\refp{#1}{#2}}
\theoremstyle{definition}
\DeclareRobustCommand{\labeltext}[2]{%
  \phantomsection
  #1\def\@currentlabel{\unexpanded{#1}}\label{#2}%
}
\newcommand{\note}[1]{}               
\newcommand*{\range}[2][1]{\{#1,\ldots,#2\}}
\newcommand{\thetas}[1][\theta_0]{\ensuremath{(#1,\bm \theta)}}
\newcommand{\thetahomo}[1][\theta_0]{$(#1,\bm \theta)$-homogeneous}
\newcommand{\diag}[1][\bm \theta]{$#1$-diagonal}
\newcommand{\an}[1][A(\bm z)]{[\bm{z^n}]#1}
\newcommand{\pid}{\m({\frac{1}{2\pi i}}^d}
\newcommand*{\dom}[1][\delta]{\Delta_{#1}}
\newcommand*{\ddom}[1][\delta]{\bm \Delta_{#1}}
\newcommand*{\odom}[1][\delta]{\bm \Omega_{#1}}
\newcommand*{\kdom}[1][\delta]{\bm K_{#1}}
\newcommand*{\poly}{\mathcal P}
\newcommand{\borel}{\mathcal B}
\newcommand{\laplace}[1][\bm c]{\mathcal L_{#1}}
\newcommand*{\cj}[1][j]{\mathcal C_{#1}}
\newcommand*{\cc}{\bm{\mathcal C}}
\newcommand*{\vj}[1][j]{V_{#1}^O}
\newcommand*{\vv}{\bm V}
\newcommand*{\nn}{n_0^{\bm \theta}}
\newcommand*{\nni}{n_0^{-\bm \theta}}
\newcommand*{\nj}[1][j]{\mathchoice{%
                   n_0^{\theta_{#1}}%
                }{ n_0\raisebox{1pt}{$\!\!^{\theta_{#1}}$}%
                }{ n_0^{\theta_{#1}}%
                }{ n_0^{\theta_{#1}}%
}}
\newcommand*{\nji}[1][j]{\mathchoice{%
                   n_0^{-\theta_{#1}}%
                }{ n_0\raisebox{1pt}{$\!\!^{-\theta_{#1}}$}%
                }{ n_0^{-\theta_{#1}}%
                }{ n_0^{-\theta_{#1}}%
}}
\renewcommand*{\Re}{\mathfrak{Re}}
\DeclareMathOperator*{\res}{Res}
\title{A transfer theorem for multivariate $\Delta$-analytic functions\\with a power-law singularity}
\author{Linxiao~Chen}
\newcommand{\Addresses}{{
  \bigskip\footnotesize

\noindent  \textsc{ETH Z\"urich, Department of Mathematics, R\"amistrasse 101, 8092 Z\"urich, Switzerland}\par\nopagebreak
  \textit{E-mail address}: \texttt{linxiao.chen@math.ethz.ch}	\medskip
}}
\begin{document}

\maketitle

\begin{abstract}
This paper presents a multivariate generalization of Flajolet and Odlyzko's transfer theorem.
Similarly to the univariate version, the theorem assumes $\Delta$-analyticity (defined coordinate-wise) of a function $A(z_1,\ldots,z_d)$ at a unique dominant singularity $(\rho_1,\ldots,\rho_d) \in (\complex^*)^d$, and allows one to translate, on a term-by-term basis, an asymptotic expansion of $A(z_1,\ldots,z_d)$ around $(\rho_1,\ldots,\rho_d)$ into a corresponding asymptotic expansion of its Taylor coefficients $a_{n_1,\ldots,n_d}$. We treat the case where the asymptotic expansion of $A(z_1,\ldots,z_d)$ contains only power-law type terms, and where the indices $n_1,\ldots,n_d$ tend to infinity in some polynomially stretched diagonal limit.
The resulting asymptotic expansion of $a_{n_1,\ldots,n_d}$ is a sum of terms of the form
\begin{equation*}
I(\lambda_1,\ldots,\lambda_d) \cdot n_0^{-\Theta} \cdot \rho_1^{-n_1}\cdots \rho_d^{-n_d},
\end{equation*}
where $(\lambda_1,\ldots,\lambda_d) \in (0,\infty)^d$ is the direction vector of the stretched diagonal limit for $(n_1,\ldots,n_d)$, the parameter $n_0$ tends to $\infty$ at similar speed as $n_1,\ldots,n_d$, while $\Theta\in \real$ and $I:(0,\infty)^d \to \complex$ are determined by the asymptotic expansion of $A$.
\end{abstract}

\section{Introduction}\label{sec:introduction}

\paragraph{Univariate transfer theorems.}
In their pioneering work \cite{FlajoletOdlyzko1990}, Flajolet and Odlyzko developped a method, known as the transfer theorems, that allows one to compute a precise asymptotic expansion of a sequence $a_n$ as $n\to \infty$, from an asymptotic expansion of its generating function $A(z) = \sum_{n\ge 0} a_n z^n$ around some singularity $\rho \in \complex^* \equiv \complex \setminus \{0\}$. We use the notation $[z^n]A(z)$ for the coefficient $a_n = \frac{1}{n!}A\0n(0)$.
One distinctive feature of the transfer theorem is that it applies to generating functions that are \emph{$\Delta$-analytic}, that is, analytic on a \emph{$\Delta$-domain} of the form $\rho \cdot \dom \equiv \set{\rho z}{z\in \dom}$ for some $\delta>0$, where
\begin{equation}\label{eq:def Delta domain}
\dom := \Set{z\in \complex}{|z|<1+\delta, z\ne 1 \text{ and } \arg(1-z) \in (-\tfrac\pi2-\delta,\tfrac\pi2+\delta)} \,.
\end{equation}
See Figure~\refp{a}{fig:Delta/cone contour definition}.
(Simple extensions of the transfer theorem also apply to functions analytic in a finite intersection of $\Delta$-domains of the form $(\rho_1 \!\cdot\! \dom) \hspace{-1pt}\cap \hspace{-0.5pt} \cdots \hspace{-0.5pt} \cap\hspace{-1pt} (\rho_n \!\cdot\! \dom)$, but we shall not discuss that further here.)\\
Under the $\Delta$-analyticity assumption, $\rho$ is the unique dominant singularity of the function $A(z)$. By the change of variable $z'=z/\rho$, we can bring this singularity to $z=1$. In the following, we will assume \wlg\ that $\rho=1$.

In its most general form, Flajolet and Odlyzko's transfer theorem applies to functions whose asymptotic expansion is composed of any regular varying functions taken from a large class of ``standard scale functions'', such as $f(z)=(1-z)^\alpha (\log(1-z))^\beta$ for $\alpha,\beta\in \real$. Here we focus on functions whose asymptotic expansion contains only power-law terms of the form $(1-z)^\alpha$. In this simple case, the transfer theorem can be formulated as follows.

\begin{citetheorem}[Univariate transfer theorem]\label{thm:u-transfer}
Let $\alpha\in \real$ and let $A(z)$ be an analytic function on $\dom$ for some $\delta>0$.
\begin{enumerate}
\item (Analytic terms have exponentially small contribution)\\
If $A(z)$ is also analytic at $1$, then there exists $r\in (0,1)$ such that $[z^n]A(z) = O(r^n)$ when $n \to \infty$.
(In this case, $z=1$ is actually \emph{not} the dominant singularity of $A(z)$.)

\item (Coefficient asymptotics of power functions)\\
When $A(z)=(1-z)^\alpha$ with $\alpha \in \real$, we have the following asymptotic expansion as $n\to \infty$,
\begin{equation}\label{eq:u-transfer power expansion}
[z^n](1-z)^\alpha
=      \frac{n^{-\alpha-1}}{\Gamma(-\alpha)}
       \sum_{k=0}^\infty \frac{e_k(\alpha)}{n^k}
\equiv \frac{n^{-\alpha-1}}{\Gamma(-\alpha)}
       \m({ 1 + \frac{e_1(\alpha)}{n} + \frac{e_2(\alpha)}{n^2} + \cdots}
\end{equation}
where $e_k(\alpha) \in \rational[\alpha]$ is a polynomial of degree $2k$ in $\alpha$. More precisely, $e_k(\alpha) = \sum_{l=0}^k g_{k,l}\, \frac{\Gamma(-\alpha)}{\Gamma(-\alpha-k-l)}$, where $g_{k,l}$ is defined by the Taylor expansion
\begin{equation}\label{eq:u-transfer g-factors}
G(x,y):=\exp \m({ -y - \m({\frac yx+1}\log(1-x) } = \sum_{k=0}^\infty \m({ \sum_{l=0}^k g_{k,l}\,y^l } x^k \,.
\end{equation}

\item (Big-O transfer and little-o transfer)\\
If $A(z) = O((1-z)^\alpha)$ when $z\to 1$ in $\dom$, then
$[z^n]A(z) = O(n^{-\alpha-1})$ when $n\to \infty$.\\
If $A(z) = \,o\,((1-z)^\alpha)$ when $z\to 1$ in $\dom$, then
$[z^n]A(z) = \,o\,(n^{-\alpha-1})$ when $n\to \infty$.

\end{enumerate}
\end{citetheorem}

In practice, the transfer theorem is usually applied to functions $A(z)$ which are linear combinations of the three cases above, as in the following statement.

\begin{theorembis}[Univariate transfer theorem, integrated form]{thm:u-transfer}
Let $A(z)$ be an analytic function on $\dom$ for some $\delta>0$ such that when $z\to 1$ in $\dom$, we have
\begin{equation}\label{eq:u-transfer fun expansion}
A(z) = A\1{reg}(z) + h_0 (1-z)^{\alpha_0} + \cdots h_m (1-z)^{\alpha_m} + o\m({ (1-z)^{\alpha_m} } \,,
\end{equation}
where $A\1{reg}(z)$ is defined and analytic in a neighborhood of $1$ in $\complex$, and $h_j\in \complex$, $\alpha_j \in \real$ for all $0\le j\le m$.
Then, the coefficients of $A(z)$ has the following expansion when $n\to \infty$
\begin{equation}\label{eq:u-transfer coeff expansion}
[z^n] A(z) = h_0\cdot [z^n](1-z)^{\alpha_0} + \cdots + h_m\cdot [z^n](1-z)^{\alpha_m} + o(n^{-\alpha_m-1})
\end{equation}
where each term $[z^n](1-z)^{\alpha_j}$ has the asymptotic expansion given in Theorem~\refp2{thm:u-transfer}.\\
The same result holds if the little-o estimates in both \eqref{eq:u-transfer fun expansion} and \eqref{eq:u-transfer coeff expansion} are replaced by big-O estimates.
\end{theorembis}

\begin{remark*}~
\begin{enumerate}[nolistsep,topsep=0.5ex,itemsep=1ex]
\item
\Cref{thm:u-transfer} and \Cref{thm:u-transfer bis} are not exactly equivalent, for the following reasons:
When $A(z)$ is analytic at $1$, Theorem~\ref{thm:u-transfer} asserts that $[z^n]A(z)$ decays exponentially as $n\to \infty$, while Theorem~\ref{thm:u-transfer bis} only implies a super-polynomial decay.
On the other hand, one cannot prove Theorem~\ref{thm:u-transfer bis} by simply applying Theorem~\ref{thm:u-transfer} to each term of the expansion \eqref{eq:u-transfer fun expansion}, because the terms $A\1{reg}(z)$ and $o((1-\alpha)^{\alpha_m})$ in \eqref{eq:u-transfer fun expansion} have \emph{a priori} no analytic continuations on the whole domain $\dom$.

\item
For a sequence of functions $s_k(n)$, the asymptotic expansion $S(n) = \sum_{k=0}^\infty s_k(n)$ ususally means that for all $m\ge 0$, we have $S(n)=s_0(n)+\cdots + s_{m-1}(n) + O(s_m(n))$ as $n\to \infty$.
Theorem~\refp2{thm:u-transfer} uses a slightly modified notion of asymptotic expansion: we choose implicitly the family $s_\beta(n)=n^{-\beta}$ ($\beta \in \real$) as the reference asymptotic scale, and view each asymptotic expansion as a weighted sum of the form $S(n) = \sum_{k=0}^\infty c_k \cdot n^{-\beta_k}$ with some $\beta_0<\beta_1<\cdots$. The difference is that now the prefactors $c_k$ may vanish for some or even all $k\ge 0$, and the expansion should be read as: for all $m\ge 0$, we have $S(n) = c_0 n^{-\beta_0} + \cdots + c_{m-1} n^{-\beta_{m-1}} + O(n^{-\beta_m})$ as $n\to \infty$.

\item
It is also possible to include $\alpha \in \complex \setminus \real$ in \Cref{thm:u-transfer,thm:u-transfer bis}. The asymptotic expansion \eqref{eq:u-transfer power expansion} would hold unmodified. But complex exponents $\alpha$ rarely appear in applications and they complicate the asymptotic scale $s_\beta(n) = n^{-\beta}$ (since $\complex$ is not totally ordered). We restrict ourselves to $\alpha \in \real$ for the sake of simplicity.

\item
Theorem~\refp{1)--(2}{thm:u-transfer} together imply that the prefactor $\frac{1}{\Gamma(-\alpha)}$ in \eqref{eq:u-transfer power expansion} must vanish when $\alpha \in \natural$. Indeed, $\alpha \mapsto \frac{1}{\Gamma(-\alpha)}$ is an entire function and we have $\frac{1}{\Gamma(-\alpha)}=0$ \emph{\Iff} $\alpha \in \natural$.
\end{enumerate}
\end{remark*}

The transfer theorems apply to a large class of naturally occuring generating functions. For example, it is well known that all D-finite functions analytic at $0$ are linear combinations of $\Delta$-analytic functions. Among them, the algebraic functions always have singularities of power-law type, to which \Cref{thm:u-transfer bis} applies.
Compared to alternative methods such as Darboux's method or the Tauberian theorems, the transfer theorem has the advantage of giving a transparent correspondance between the asymptotic expansion of the generating function and that of its coefficients. See \cite[Sec.~5]{FlajoletSedgewick2009} for a detailed discussion about this comparison.

In this paper, we present a generalization of the transfer theorem to the multivariate setting. Below, we start by recalling some basic notations about multivariate generating functions, and then define the regimes of coefficient asymptotics with which our transfer theorem will be concerned.

\paragraph{Multivariate generating functions.}
In many practical problems, the relevant information is  captured naturally by a multidimensional infinite array $(a_{\bm n})_{\bm n \in \natural^d} \in \complex^{\natural^d}$.
Such multidimensional arrays and the multivariate generating functions which encode them will be this paper's central objects.
To make the formulas compact, we will use the following multi-index notations: For any formal or complex vectors $\bm z=(z_1,\ldots,z_d)$ and $\bm \theta=(\theta_1,\ldots,\theta_d)$, we denote
\begin{equation}\label{eq:multi-index notation vector}
\bm \theta \bm z = (\theta_1 z_1, \ldots, \theta_d z_d)
\,,\qquad
\bm{\theta \cdot z} = \theta_1 z_1 + \cdots + \theta_d z_d
\,,\qquad
\bm z^{\bm \theta} = z_1^{\theta_1} \cdots z_d^{\theta_d}
\,,\qquad
\dd \bm z = \dd z_1 \cdots \dd z_d \,.
\end{equation}
And, for any scalar $\sigma$ and
integer vector $\bm m \in \natural^d$, let
\begin{equation}\label{eq:multi-index notation scalar}
\sigma^{\bm \theta} = (\sigma^{\theta_1},\ldots,\sigma^{\theta_d})
\qtq{and}
\bm{\partial^m} = \partial_1^{m_1}\cdots \partial_d^{m_d}
\equiv \pd[m_1]{}{\!z_1\!}\cdots \pd[m_d]{}{\!z_d}.
\end{equation}
With these notations, the multivariate generating function of the array $(a_{\bm n})_{\bm n\in \natural^d}$ can be written as
\begin{equation}\label{eq:multiv GF}
A(\bm z) = \sum_{\bm n \in \natural^d} a_{\bm n} \bm{z^n}
\equiv \sum_{n_1 = 0}^\infty \cdots \sum_{n_d = 0}^\infty a_{n_1,\ldots,n_d}\, z_1^{n_1} \cdots z_d^{n_d}.
\end{equation}
Similarly to the univariate case, we denote the coefficient  $a_{\bm n}$ by $[\bm{z^n}] A(\bm z)$.
We assume that every generating function in this paper is absolutely convergent in an open neighborhood of $\bm 0\equiv (0,\ldots,0)\in \complex^d$, so that it defines an analytic function there.


We refer to \cite{Hormander1990} for the general theory on power series and analytic functions in several variables.
One particular fact that we will use without further mention is the uniqueness of analytic continuation: if $A$ and $B$ are analytic functions on an open connected domain $\Omega\subseteq \complex^d$ and $A(\bm z)=B(\bm z)$ on any open subset of $\Omega$, then $A(\bm z)=B(\bm z)$ for all $\bm z\in \Omega$.

\paragraph{Stretched diagonal limits.}
One central problem of analytic combinatorics in several variables is to understand the asymptotics of the coefficients $[\bm{z^n}]A(\bm z)$ when the components $n_1,\ldots,n_d$ of the multi-index tend to $\infty$ \emph{simultaneously}.
In general, one needs to put some constraint on the relative speeds at which $n_1,\ldots,n_d$ grow in order to get a useful asymptotic formula, and the interesting regimes are to a large extent dictated by the structure of the singularities of $A(\bm z)$.

In this work, we will be interested in the \emph{stretched diagonal limits}, where $n_1,\ldots,n_d$ grow at polynomial speeds relative to each other. In other words, we require that for each $j=2,\ldots,d$, there exists a constant $\theta_j > 0$ such that the ratio $n_j /n_1^{\theta_j}$ remains in some compact interval $\mathcal I \subset \real_{>0} \equiv (0,\infty)$ when $n_1 \to \infty$.
A symmetrized definition of this limit regime goes as follows:
\begin{definition*}[Stretched diagonal limit]\label{def:stretched diagonal limit}
We say that the multi-index $\bm n \in \natural^d$ tends to $\bm \infty$ in the \emph{stretched diagonal regime} if there exist an exponent vector $\bm \theta \in \real_{>0}^d$ and an auxiliary variable $n_0>0$, such that
\begin{equation}
\bm n = \bm \lambda n_0^{\bm \theta} \equiv \m({ \lambda_1 n_0^{\theta_1}, \ldots, \lambda_d n_0^{\theta_d} }
\end{equation}
for some prefactor $\bm \lambda = \bm \lambda(n_0)$ that remains in a compact set $\mathcal K\subset \real_{>0}^d$ when $n_0\to \infty$.
In this case, we will also say that $\bm n\to \bm \infty$ in the \emph{\diag\ limit}.
\end{definition*}

\begin{remark*}


If we require in addition that $\bm \lambda(n_0) \to \bm \lambda_* \in \real_{>0}^d$ as $n_0\to \infty$, then the point $\bm n =\bm \lambda n_0^{\bm \theta}$ would tend to $\bm \infty$ roughly along the curve $\mathcal D_{ \bm \lambda_*,\bm \theta} = (\bm \lambda_* t^{\bm \theta},t>0)$. But we only require $\bm \lambda(n_0)$ to stay in some compact set. Intuitively this means that $\bm n$ can jump between the curves $\mathcal D_{\bm \lambda_*,\bm \theta}$ for different values of~$\bm \lambda_*$.
Consequently, the asymptotics that we write in the stretched diagonal limit should be understood as uniform \wrt\ $\bm \lambda_*$ on every compact subset of $\real_{>0}^d$.

Notice that for any $\tau>0$, the $(\tau \bm \theta)$-diagonal limit is the same as the \diag\ limit: it suffices to replace the variable $n_0$ by $n_0^{\tau}$ or $n_0^{1/\tau}$ to pass from one to the other. 
The classical notion of \emph{diagonal limit}, e.g.\ as defined in \cite{PemantleWilson2013}, corresponds to the $(1,\cdots,1)$-diagonal limit in our terminology.
\end{remark*}

Apart from being the relevant limit regime for multivariate $\Delta$-analytic function with power-law singularity (to be defined below), the stretch diagonal limits also arise naturally from the study of critical phenomena in probability and mathematical physics.

\paragraph{Preliminary definitions.}
Let us define some domains and function classes needed for stating the main theorems.
Let $\complex_*=\complex\setminus\{0\}$ and $\real_{>0}=(0,\infty)$.
For $\delta>0$, we write
$K_\delta = \Set{u \in \complex_*}{|\arg(u)|<\delta}$ and denote $\Omega_\delta = K_{\pi/2+\delta}$. The multivariate versions of these cones are denoted $\kdom = K_\delta^d$ and $\odom = \Omega_\delta^d$.
For $\epsilon>0$, $w\in \complex$ and $\bm w \in \complex^d$, let $B_\epsilon(w) = \Set{z\in \complex}{|z-w|<\epsilon}$ and $\bm B_\epsilon(\bm w) = B_\epsilon(w_1) \times \cdots \times B_\epsilon(w_d)$.
Notice that $\Omega_\delta$ is related to the $\Delta$-domain $\dom$ by $\Omega_\delta = \set{\mu(1-z)}{\mu>0 \text{ and } z\in \dom}$.

\begin{definition*}[Multivariate $\Delta$-analytic functions]
Let $\bm \rho \in \complex_*^d$. We say that a multivariate function $A(\bm z)$ is \emph{$\Delta$-analytic at $\bm \rho$} if it has an analytic continuation on the product domain $\bm \rho {\ddom}:= (\rho_1 \dom) \times \cdots \times (\rho_d \dom)$ for some $\delta>0$, where $\dom$ is the univariate $\Delta$-domain defined in \eqref{eq:def Delta domain}.
\end{definition*}

Like in the univariate case, one can make the change of variable $\bm z'=\bm z/\bm \rho \equiv (z_1/\rho_1,\ldots, z_d/\rho_d)$ to bring the point $\bm \rho$ to $\bm 1\equiv (1,\ldots,1)$. In the following, we will focus \wlg\ on functions which are $\Delta$-analytic at $\bm 1$.

\begin{definition*}[Demi-analytic functions]
Let $\bm \Omega=\Omega_1 \times \cdots \times \Omega_d$ be an open product domain in $\complex^d$ and let $\bm \rho \in (\partial \Omega_1)\times \cdots \times (\partial \Omega_d)$. For $j\in \{1,\ldots,d\}$, denote $\bm \Omega_{\hat j} = \Omega_1 \times \cdots \times \Omega_{j-1} \times \complex \times \Omega_{j+1} \times \cdots \times \Omega_d$.
We say that a function $A:\bm \Omega \to \complex$ is \emph{demi-analytic at $\bm \rho \in \bm \Omega$} if $A$ is analytic on $\bm \Omega$ and there exist $\epsilon>0$ and a decomposition $A=A_1+\cdots +A_d$ such that $A_j$ is analytic in $B_\epsilon(\bm \rho) \cap \bm \Omega_{\hat j}$ for each $j\in \{1,\ldots,d\}$.
If each term $A_j$ in the above decomposition is analytic on $\bm \Omega_{\hat j}$, then we say that $A$ is \emph{demi-entire}.
\end{definition*}

\begin{definition*}[Generalized homogeneous functions]
Let $\bm K$ be a cone in $\complex^d$, i.e.\ a subset of $\complex^d$ such that $\set{\sigma \bm z}{\bm z\in \bm K} = \bm K$ for all $\sigma>0$.
For $\theta_0 \in \real$ and $\bm \theta \in \real_{>0}^d$, we say that a function $H:\bm K\to \complex$ is \emph{\thetahomo} if
\begin{equation}\label{eq:def homogeneous}
H(\sigma^{\bm \theta}\bm u) \equiv H(\sigma^{\theta_1}u_1,\ldots, \sigma^{\theta_d}u_d)= \sigma^{\theta_0}H(\bm u)
\end{equation}
for all $\bm u\in \bm K$ and $\sigma>0$.
\end{definition*}

It is clear that a \thetahomo\ function is also $(\tau \theta_0,\tau \bm \theta)$-homogeneous for all $\tau>0$.
The classical notion of \emph{homogeneous functions of degree $D$} becomes \emph{$(D,\bm 1)$-homogeneous} in our terminology.

The three definitions above generalize respectively the notions of $\Delta$-analytic functions, locally analytic functions (for the term $A\1{reg}$ in \Cref{thm:u-transfer bis}), and power functions (for the term $h\cdot (1-z)^{\alpha}$) used in the univariate transfer theorems.
To state the multivariate transfer theorem, we need one more definition whose counterpart does not appear explicitly in the univariate setting:

\begin{definition*}[Functions of polynomial type]
For any $\delta>0$,
we say that a function $F$ is \emph{of polynomial type (globally) on $\kdom$} if there exist $C,M>0$ such that
\begin{equation}\label{eq:def polynomial type 2-ended}
\forall \bm u\in \kdom,\quad
\abs{F(\bm u)} \le C\cdot \m({ |u_1|^{-M} + |u_1|^M + \cdots + |u_d|^{-M} + |u_d|^M }
\end{equation}
We say that $F$ is \emph{of polynomial type locally at $\bm 0\in \kdom$} if the above bound only holds in a neighborhood of $\bm 0\in \kdom$. Equivalently, $F$ is \emph{of polynomial type locally at $\bm 0\in \kdom$} if there exist $\epsilon,C,M>0$ such that
\begin{equation}\label{eq:def polynomial type near 0}
\forall \bm u\in \kdom \cap \bm B_{\bm 0,\epsilon}\,,\quad
\abs{F(\bm u)} \le C\cdot \m({ |u_1|^{-M} + \cdots + |u_d|^{-M}}.
\end{equation}
Similarly, we say that a function $A$ is \emph{of polynomial type on $\ddom$} if there exist $C,M>0$ such that
\begin{equation}\label{eq:def polynomial type near 1}
\forall \bm z\in \ddom,\quad
\abs{A(\bm z)} \le C\cdot \m({ |z_1-1|^{-M} + \cdots + |z_d-1|^{-M} }
\end{equation}
(here we do not need the terms $|z_j-1|^M$ on the \rhs\ because $\ddom$ is bounded),
and we say that $A$ is \emph{of polynomial type locally at $\bm 1 \in \ddom$} if it satisfies the above bound in $\ddom \cap \bm B_{\bm 1,\epsilon}$ for some $\epsilon>0$.
For $\bm S = \kdom$, $\odom$ or $\ddom$, we write $\mathcal P(\bm S) =\set{f:\bm S\to \complex}{f \text{ is analytic and of polynomial type on }\bm S}$.
\end{definition*}

Up to decreasing $\delta$, a continuous function on $\ddom$ is always bounded by a constant on $(\dom\setminus U)^d$ for any neighborhood $U$ of $1\in \complex$. Thus, the condition that $A$ is of polynomial type (\emph{globally}) on $\ddom$ is essentially an upper bound for $\abs{A(\bm z)}$ when $z_j\to 1$ in $\dom$ for \emph{some} $j\in \{1,\ldots,d\}$.
In contrast, the condition of being of polynomial type \emph{locally} at $\bm 1\in \ddom$ only gives a bound for $|A(\bm z)|$ when $z_j\to 1$ for \emph{all} $j\in \{1,\ldots,d\}$.
When $d=1$, the two conditions are essentially the same. They do not appear explicitly in the statement of the univariarte transfer theorem because a function having an asymptotic expansion of the form \eqref{eq:u-transfer fun expansion} is automatically of polynomial type on $\dom$.

\paragraph{Main results.}
We are now ready to state the multivariate transfer theorem. Like in the univariate case, we formulate it in two ways, one discussing the building blocks of the coefficient asymptotics piece by piece, and the other showing how the theorem would be applied in practice.

\begin{theorem}[Multivariate transfer theorem]\label{thm:m-transfer}
Let $\theta_0\in \real$, $\bm \theta\in \real_{>0}^d$ and $A\in \poly(\ddom)$ for some $\delta>0$.
We consider the asymptotics of the coefficients $[\bm{z^n}]A(\bm z)$ when $\bm n\to \bm \infty$ in the \diag\ regime.
\begin{enumerate}
\item (Demi-analytic terms have exponentially small contribution)\\
If $A$ is demi-analytic at $\bm 1\in \ddom$, then there exist $r\in (0,1)$ and $\sigma>0$ such that $[\bm{z^n}]A(\bm z) = O(r^{n_0^\sigma})$.

\item (Coefficient asymptotics of generalized homogeneous functions)\\
When $A(\bm z) = H(\bm 1-\bm z)$ and $H$ is \thetahomo\ ($\theta_0\in \real$), we have the asymptotic expansion
\begin{equation}\label{eq:m-transfer power expansion}
[\bm{z^n}] H(\bm 1-\bm z) = \frac1{n_0^\Theta}
\sum_{\bm k \in \natural^d} \frac{
 D_{\bm k} I(\bm \lambda)}{ n_0^{\bm{k\cdot \theta}} }
\equiv
\frac1{n_0^\Theta} \m({ I(\bm \lambda) + \sum_{j=1}^d \frac{\partial_j I(\bm \lambda) + \frac12 \lambda_j \partial_j^2 I(\bm \lambda)}{n_0^{\theta_j}} + \cdots }.
\end{equation}
where $\Theta = \theta_0+\theta_1+\cdots+\theta_d$, $D_{\bm k}=\sum_{\bm l\le \bm k} \, g_{\bm k,\bm l} \cdot \bm{\lambda^l} \bm \partial^{\bm k+\bm l}$ is a partial differential operator of order $2(k_1+\cdots+k_d)$, and $I:\real_{>0}^d \to \complex$ is the inverse Laplace transform of $H$ defined by
\begin{equation}\label{eq:m-transfer scaling function}
I(\bm \lambda) = \m({ \frac{1}{2\pi i} }^d \int_{\vv_{\delta'}} H(\bm u) e^{\bm {\lambda \cdot u}} \dd \bm u \,.
\end{equation}
In the above formulas, we denote $g_{\bm k,\bm l} = g_{k_1,l_1}\cdots g_{k_d,l_d}$, with the numbers $(g_{k,l})_{k,l\ge 0}$ defined by \eqref{eq:u-transfer g-factors}. The sum $\sum_{\bm l\le \bm k}$ runs over $\set{\bm l\in \natural^d}{l_j\le k_j \text{ for all }1\le j\le d}$, while the integral is over~$\bm V_{\delta'} = V_{\delta'}^d$, where $V_{\delta'} \subset \Omega_\delta$ is any piecewise smooth curve which coincides with the rays $\partial \Omega_{\delta'}$ outside a bounded region for some $\delta'\in (0,\delta)$, e.g.\ as in Figure~\refp{b}{fig:Delta/cone contour definition}.
(Recall that $\Omega_\delta = \Set{u\in \complex_*}{|\arg(u)|<\frac\pi2 + \delta}$.)

\item (Big-O transfer and little-o transfer)\\
If $A(\bm z)=O(\tilde H(\bm 1-\bm z))$ for some \thetahomo\ $\tilde H$ as $\bm z\to \bm 1$ in $\ddom$, then $[\bm{z^n}] A(\bm z) = O(n_0^{-\Theta})$.\\
If $A(\bm z)=o\,(\tilde H(\bm 1-\bm z))$ for some \thetahomo\ $\tilde H$ as $\bm z\to \bm 1$ in $\ddom$, then $[\bm{z^n}] A(\bm z) = o\,(n_0^{-\Theta})$.
\end{enumerate}
\end{theorem}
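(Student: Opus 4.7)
The plan is to adapt Flajolet and Odlyzko's Hankel-contour argument to the multivariate setting, using the polynomial-type hypothesis to justify all contour deformations and integral exchanges. The starting point is Cauchy's formula
\begin{equation*}
[\bm{z^n}] A(\bm z) = \frac{1}{(2\pi i)^d} \oint \cdots \oint \frac{A(\bm z)}{\bm z^{\bm n+\bm 1}} \, \dd \bm z ,
\end{equation*}
taken initially over a product of small circles around the origin. Using the $\Delta$-analyticity of $A$ together with the polynomial-type bound on $\ddom$, I would deform each coordinate contour in turn (fixing the remaining variables) into a Hankel-like contour $\cj \subset \dom$ wrapping around the singularity at $z_j=1$. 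The polynomial-type hypothesis is exactly what sends the contribution of the outer arc of $\dom$ to zero during the deformation. The outcome is a representation of $[\bm{z^n}]A(\bm z)$ as an integral of $A(\bm z)/\bm z^{\bm n+\bm 1}$ over $\cc = \cj[1] \times \cdots \times \cj[d]$.

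For part (1), I would use the demi-analytic decomposition $A = A_1 + \cdots + A_d$ to isolate, for each $j$, a piece $A_j$ that is analytic in $z_j$ throughout a full disk near $1$ (the other variables being near $1$). For such a piece the contour in $z_j$ can be pushed to a circle of radius $1+\eta$ with $\eta > 0$, yielding $O\bigl((1+\eta)^{-n_j}\bigr)$. Since $n_j = \lambda_j n_0^{\theta_j}$ with $\bm\lambda$ in a compact subset of $\real_{>0}^d$, this decays like $O(r^{n_0^\sigma})$ for some $r \in (0,1)$ and some $\sigma \in (0, \min_j \theta_j]$.

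The heart of the proof is part (2). The key step is the rescaling $u_j = n_0^{\theta_j}(1 - z_j)$, under which the Hankel contours $\cj$ map onto contours in $\Omega_\delta$ whose tails match the rays $\partial \Omega_{\delta'}$, that is, they deform to a copy of $\vv_{\delta'}$. The \thetahomo\ property of $H$ turns $H(\bm 1 - \bm z)$ into $n_0^{-\theta_0} H(\bm u)$, while the Jacobians (with their orientation reversals) contribute $n_0^{-(\theta_1 + \cdots + \theta_d)}$, combining to the prefactor $n_0^{-\Theta}$. The remaining factor
\begin{equation*}
\prod_{j=1}^d \bigl(1 - u_j / n_0^{\theta_j}\bigr)^{-n_j - 1}
\end{equation*}
is expanded coordinate-wise in $n_0^{-\theta_j}$ by means of the generating function $G(x,y)$ of \eqref{eq:u-transfer g-factors} with $x = u_j/n_0^{\theta_j}$ and $y = \lambda_j u_j$; its leading contribution is $e^{\bm\lambda \cdot \bm u}$ and the higher-order contributions have coefficients $g_{\bm k, \bm l}$ times monomials in $\bm\lambda$ and $\bm u$. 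Integrating against $H(\bm u)$ over $\vv_{\delta'}$, the leading term recovers $I(\bm\lambda)$ as in \eqref{eq:m-transfer scaling function}, and the higher-order terms assemble into $D_{\bm k} I(\bm\lambda)$ upon recognizing the operators $\bm\lambda^{\bm l} \bm\partial^{\bm k + \bm l}$ as differentiation under the integral sign applied to $I$.

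Part (3) is then derived from the same contour and rescaling argument applied to a dominating \thetahomo\ function $\tilde H$: for big-O, one bounds $|A(\bm z)| \le C\,|\tilde H(\bm 1 - \bm z)|$ on the rescaled Hankel contour and estimates the resulting integral without performing the Taylor expansion; for little-o, one combines an $\epsilon$-localization near $\bm 1$ with an exponentially small estimate away from $\bm 1$ obtained by a variant of part (1). The main technical obstacle I foresee is to carry out the multivariate contour deformation while keeping the estimates uniform in $\bm\lambda$ on compact sets. One must simultaneously control the integrand near the \emph{corners} of $\ddom$ (where several $z_j$ approach $1$ together and the polynomial-type bound is tight) and along the unbounded portions of $\vv_{\delta'}$, where the polynomial growth of $H(\bm u)$ must be dominated by the exponential decay of $e^{\bm\lambda \cdot \bm u}$ in the left half-plane, a balance ensured precisely by the choice $V_{\delta'} \subset \Omega_\delta$. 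This interplay between the polynomial-type condition and the geometry of $\Omega_\delta$ is what makes the hypothesis essential rather than cosmetic.
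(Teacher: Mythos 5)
Your proposal follows essentially the same route as the paper's own proof: Cauchy's formula deformed to a product of Hankel-type contours inside $\ddom$, localization near $\bm 1$ justified by the global polynomial-type bound, coordinate-wise contour pushing past $z_j=1$ for the demi-analytic pieces, and the rescaling $u_j=n_0^{\theta_j}(1-z_j)$ combined with the $G(x,y)$ expansion and differentiation under the integral sign to produce $I(\bm\lambda)$ and the operators $D_{\bm k}$. The points you leave implicit (first restricting to the localized contour before invoking the local demi-analytic decomposition, the uniform Taylor-remainder bounds that make the term-by-term expansion rigorous, the global monomial bound on homogeneous functions of polynomial type, and the shrinking or two-scale localization in the little-o case) are precisely the technical steps the paper fills in, so the outline is sound and matches the published argument.
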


\begin{theorembis}[Multivariate transfer theorem]{thm:m-transfer}
Let $\bm \theta\in \real_{>0}^d$ and $A \in \poly(\ddom)$ for some $\delta>0$. Assume that when $\bm z\to \bm 1$ in $\ddom$, the function $A$ has an asymptotic expansion of the form
\begin{equation}\label{eq:m-transfer fun expansion}
A(\bm z) = A\1{reg}(\bm z) + H_0(\bm 1-\bm z) + \cdots + H_{m-1}(\bm 1-\bm z) + o\m({H_m(\bm 1-\bm z)}\,,
\end{equation}
where $A\1{reg}$ is demi-analytic at $\bm 1\in \ddom$, and each $H_k$ is \thetahomo[\theta_0\0k] and of polynomial type locally at $\bm 0\in \odom$, with $\theta_0\00>\cdots>\theta_0\0{m-1}\ge \theta_0\0m$.
Then as $\bm n\to \bm \infty$ in the \diag\ limit, we~have
\begin{equation}\label{eq:m-transfer coeff expansion}
[\bm{z^n}]A(\bm z) = [\bm{z^n}] H_0(\bm 1-\bm z) + \cdots + [\bm{z^n}] H_{m-1}(\bm 1-\bm z) + o(n_0^{-\Theta_m})
\end{equation}
where each $[\bm{z^n}] H_k(\bm 1-\bm z)$ has the asymptotic expansion given in Theorem~\refp2{thm:m-transfer}, $\Theta_m =\theta_0\0m +\theta_1 +\cdots +\theta_d$, and the little-o estimate is uniform \wrt\ $\bm \lambda$ on all compact subsets of $\real_{>0}^d$.
\\
The same result holds if the little-o estimates in both \eqref{eq:m-transfer fun expansion} and \eqref{eq:m-transfer coeff expansion} are replaced by big-O estimates.
\end{theorembis}

\begin{remark*}~The following remarks are analoguous to their univariate counterparts below \Cref{thm:u-transfer bis}.
\begin{enumerate}[nolistsep,topsep=0.5ex,itemsep=1ex]
\item
Theorems~\ref{thm:m-transfer}~and~\ref{thm:m-transfer bis} are not exactly equivalent, for the same reason as in the univariate case.

\item
The asymptotic expansion \eqref{eq:m-transfer power expansion} in Theorem~\refp2{thm:m-transfer} uses the same modified notion of asymptotic expansion as in Theorem~\refp2{thm:u-transfer}, with the asymptotic scale $s_\beta(n_0)=n_0^{-\beta}$ ($\beta \in \real$).

\item
It is also possible to include $\theta_0\in \complex \setminus \real$ in \Cref{thm:m-transfer,thm:m-transfer bis}. The expansion \eqref{eq:m-transfer power expansion} would still hold with a complex value for $\Theta$. But we restrict ourselves to $\theta_0\in \real$ for simplicity.

\item
Theorem~\refp{1)--(2}{thm:m-transfer} imply that for a \thetahomo\ function $H\in \poly(\odom)$, if $H$ is demi-analytic at $\bm 0\in \odom$, then its inverse Laplace transform $I(\bm \lambda)$ vanishes for all $\bm \lambda \in \real_{>0}^d$. We will see in \Cref{cor:scaling function properties} below that the converse is also true.
\end{enumerate}
Let us also make some remarks about the expansion \eqref{eq:m-transfer fun expansion} in Theorem~\ref{thm:m-transfer bis}. These will be discussed in more details in \Cref{sec:discussions}.

\begin{enumerate}[resume,nolistsep,topsep=0.5ex,itemsep=1ex]
\item
The expansion \eqref{eq:m-transfer fun expansion} is in general not unique. The reason is that a demi-analytic function can also contain \thetahomo\ components for any $\theta_0 \in \real$. In principle, one could fix $A\1{reg}=0$ in Theorem~\ref{thm:m-transfer bis} without reducing significantly the class of functions $A(\bm z)$ covered by the theorem. But having the flexibility of choosing any demi-analytic function $A\1{reg}$ makes the theorem easier to apply.

\item
Once $A\1{reg}$ is chosen, one can write $A(\bm z) = A\1{reg}(\bm z) + A\1{sing}(\bm 1-\bm z)$. If the expansion \eqref{eq:m-transfer fun expansion} exists, then its terms $H_k$ can be obtained as the coefficients in the \emph{univariate} asymptotic expansion $A\1{sing}(\varepsilon^{\bm \theta} \bm u) = H_0(\bm u) \cdot \varepsilon^{\theta_0\00} + \cdots + H_{m-1}(\bm u) \cdot \varepsilon^{\theta_0\0{m-1}} + o(\varepsilon^{\theta_0\0m})$ \wrt\ $\varepsilon \to 0^+$.

\item
Although not assumed in Theorem~\ref{thm:m-transfer bis}, the functions $\bm z\mapsto H_k(\bm 1-\bm z)$ ($0\le k<m$) are necessarily analytic on $\ddom$. See \Cref{lem:H_k analytic} below. In particular, the coefficients $\an[H_k(\bm 1-\bm z)]$ are well-defined.

\item
Unlike the univariate case, the generalized homogenous function $H_m$ used in the little-o estimate is in general different from the previous term $H_{m-1}$ of the asymptotic expansion. The reason is that the class of \thetahomo\ functions is one-dimensional (generated by $H(u)=u^{\theta_0/\theta})$ in the univariate case, but infinite-dimensional in the multivariate case.
\end{enumerate}
\end{remark*}

In many applications, one is only interested in the dominant asymptotics of the coefficients. For this, we can simplfy Theorems~\ref{thm:m-transfer}~and~\ref{thm:m-transfer bis} to the following statement: for any generating function $A \in \poly(\ddom)$,
\begin{equation}
A(\bm z) = A\1{reg}(\bm z) + H(\bm 1-\bm z) + o(\tilde H(\bm 1-\bm z))
\qquad \Rightarrow \qquad
[\bm{z^n}]A(\bm z) \sim I(\bm \lambda) \cdot n_0^{-\Theta}
\end{equation}
where $A\1{reg}$ is demi-analytic and of polynomial type locally at $\bm 1\in \ddom$, $H$ and $\tilde H$ are \thetahomo\ and of polynomial type locally at $\bm 0\in \ddom$, and $\Theta\in \real$ and $I:\real_{>0} \to \complex$ are defined as in Theorem~\refp2{thm:m-transfer}. As in the theorems, the asymptotics of the coefficients is taken in the \diag\ regime.

The functional form of the prefactor $I(\bm \lambda)$ is often of practical importance (see \Cref{sec:discussions} for more discussions). In Theorem~\ref{thm:m-transfer}, $I(\bm \lambda)$ was expressed as an integral transform of $H(\bm u)$. The next theorem will provide some basic properties of this transform and its inverse.
We define the \emph{inverse Laplace transform} (also called \emph{Borel transform}) of a function $H$ by
\begin{equation}\label{eq:def Borel transform}
\borel[H](\bm \lambda) = \pid \int_{\vv_{\delta'}} e^{\bm { \lambda \cdot u }} H(\bm u) \, \dd \bm u
\end{equation}
where $\delta'\in (0,\delta)$ and $\vv_{\delta'}$ is a contour of the form specified below \Cref{eq:m-transfer scaling function}.
On the other hand, for any given $\bm c\in \real_{>0}^d$, we define the \emph{Laplace tranform (truncated at $\bm c$)} of a function $I$ by
\begin{equation}
\laplace{}[I](\bm u)
= \int_{\bm c}^{\bm \infty} \!
e^{-\bm{ \lambda \cdot u}} I(\bm \lambda) \,\dd \bm \lambda
\equiv \int_{[c_1,\infty)\times \cdots \times [c_d,\infty)}
e^{-\bm{ \lambda \cdot u}} I(\bm \lambda) \,\dd \bm \lambda.
\end{equation}

\begin{theorem}[Properties of the Borel-Laplace transforms]
\label{thm:Borel-Laplace}
Fix $\delta\in (0,\pi/2)$ and $\bm c\in \real_{>0}^d$.
\begin{enumerate}[itemsep=0ex,topsep=0.5ex]
\item
For all $H\in \poly(\odom)$, $\borel[H]$ defines an analytic function on $\kdom$ independent of the value  of $\delta'\in (0,\delta)$. Moreover, $\borel[H] \in \poly(\kdom[\delta^\circ])$ for all $\delta^\circ\in (0,\delta)$.

\item
For all $I\in \poly(\kdom)$, $\laplace{}[I]$ defines an analytic function on $\odom$.
Moreover, for all $\delta^\circ \in (0,\delta)$, there exists $M>0$ such that $\borel \circ \laplace{} [I]$ is well-defined and analytic on $\Set{\bm \lambda \in \kdom[\delta^\circ]}{\forall j,\, |\lambda_j|>M}$.

\item ($\laplace$ is a right inverse of $\borel$).
For all $I\in \poly(\kdom)$, we have $\borel \circ \laplace{}[I]=I$.\\
For all $H\in \poly(\odom)$, there exists a demi-entire function $E_{\bm c}:\odom\to \complex$ such that $\mathcal L_{\bm c}\circ \borel[H]=H+E_{\bm c}$.
\end{enumerate}
\end{theorem}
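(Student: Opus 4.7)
My overall strategy is to reduce each part to one-dimensional statements via Fubini and contour deformations, then reassemble coordinate by coordinate. For Part~(1), given $\bm\lambda\in\bm K_\delta$ and $H\in\mathcal P(\bm \Omega_\delta)$, the convergence of $\mathcal B[H](\bm\lambda)$ follows from the elementary bound $|e^{\lambda_j u_j}|=e^{-|\lambda_j||u_j|\sin(\delta'\pm\arg\lambda_j)}$ on the rays of $\partial \Omega_{\delta'}$, which gives exponential decay dominating the polynomial growth of $H$ as soon as $\delta'>\max_j|\arg\lambda_j|$; such a $\delta'\in(0,\delta)$ exists for any $\bm\lambda\in\bm K_\delta$. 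Cauchy's theorem inside $\bm\Omega_\delta$ gives independence of $\delta'$, and differentiation under the integral yields analyticity on $\bm K_\delta$. For the polynomial-type estimate on $\bm K_{\delta^\circ}$ with $\delta^\circ<\delta$, I would fix $\delta'\in(\delta^\circ,\delta)$ to get a uniform decay rate $\sin(\delta'-\delta^\circ)$, then perform the rescaling $u_j=v_j/|\lambda_j|$ to convert the polynomial growth $|u_j|^{\pm M}$ into $|\lambda_j|^{\mp M}$ factors.

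Part~(2) proceeds dually. The defining integral for $\mathcal L_{\bm c}[I](\bm u)$ converges absolutely on the real-positive cone $(0,\infty)^d$ since $\lambda_j\ge c_j>0$ dominates the polynomial growth of $I$. To extend analytically to $\bm\Omega_\delta$, I would rotate each integration ray from $[c_j,\infty)$ to $c_j+e^{i\alpha_j}[0,\infty)$ for some $\alpha_j\in(-\delta,\delta)$ with $\Re(e^{i\alpha_j}u_j)>0$; such a rotation exists precisely when $|\arg u_j|<\pi/2+\delta$. Cauchy's theorem applied inside $\bm K_\delta$ guarantees independence of $\alpha_j$, and analyticity of $\mathcal L_{\bm c}[I]$ on $\bm\Omega_\delta$ follows. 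The second claim of~(2) is obtained from a polynomial-type estimate of $\mathcal L_{\bm c}[I]$ on $\bm V_{\delta'}$ deduced from the rotated-contour representation; taking $|\lambda_j|>M$ large then makes the defining integral for $\mathcal B\circ \mathcal L_{\bm c}[I]$ absolutely convergent on the stated region.

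For Part~(3), the identity $\mathcal B\circ \mathcal L_{\bm c}[I]=I$ is obtained by deforming $\bm V_{\delta'}$ inside $\bm\Omega_\delta$ to the Bromwich contour $\{\Re(u_j)=a_j>0\}$ in each coordinate, applying Fubini, and recognizing the inner $\bm u$-integral as the standard Bromwich inversion of the Laplace transform of $I\cdot\mathbf 1_{[\bm c,\bm\infty)}$, which recovers $I(\bm\lambda)$ for real $\bm\lambda>\bm c$; analytic continuation extends this to the whole domain of~(2). For the dual identity $\mathcal L_{\bm c}\circ\mathcal B[H]=H+E_{\bm c}$, I would substitute the integral definition of $\mathcal B[H]$ and apply Fubini to get
\[ \mathcal L_{\bm c}[\mathcal B[H]](\bm u') = \int_{\bm c}^{\bm\infty} e^{-\bm\lambda\cdot\bm u'}\mathcal B[H](\bm\lambda)\,d\bm\lambda, \]
then decompose $[\bm c,\bm\infty)^d$ by inclusion-exclusion as $\bm\Gamma-\sum_{S\ne\emptyset}A_S$, where $\bm\Gamma$ is a product of Hankel-type contours around $0$ (bypassing any polynomial singularity of $\mathcal B[H]$ there) and $A_S=\prod_{j\in S}\gamma_j\times\prod_{j\notin S}[c_j,\infty)$ with each $\gamma_j\subset K_\delta$ a compact contour from near $0$ to $c_j$. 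A limiting form of the first identity of~(3), applied along $\bm\Gamma$, identifies $\int_{\bm\Gamma}e^{-\bm\lambda\cdot\bm u'}\mathcal B[H](\bm\lambda)\,d\bm\lambda=H(\bm u')$. Each remaining piece $\int_{A_S}$ with $S\ne\emptyset$ is analytic on $\bm\Omega_\delta$ and entire in $u'_j$ for every $j\in S$, because for those $j$ the $\lambda_j$-integration is over a compact contour. Grouping the pieces by $j=\min S$ yields the decomposition $E_{\bm c}=E_1+\cdots+E_d$ with each $E_j$ analytic on $\bm\Omega_{\hat j}$, establishing that $E_{\bm c}$ is demi-entire.

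The main obstacle will be the delicate contour analysis near $\bm\lambda=\bm 0$ in Part~(3): the Fubini interchanges and the identity $\int_{\bm\Gamma}=H$ (a Hankel version of $\mathcal L_{\bm 0}\circ\mathcal B[H]=H$) require careful treatment of the potential polynomial singularity of $\mathcal B[H]$ at the origin, and verifying that each $E_j$ extends to be analytic on the full demi-entire domain $\bm\Omega_{\hat j}$ (rather than only on $\bm\Omega_\delta$) is the final consistency check that the inclusion-exclusion scheme produces a genuinely demi-entire error term.
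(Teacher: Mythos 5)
Parts (1) and (2) (first claim) of your plan follow the paper's own route and are fine in outline, and your Bromwich route for $\mathcal B\circ\mathcal L_{\bm c}[I]=I$ is a viable alternative to the paper's argument (which splits $V_{\delta'}$ into upper and lower halves, uses two differently rotated $\lambda$-contours to make Fubini legitimate, and finishes with a residue computation); note, though, that on the Bromwich line the naive Fubini interchange is not absolutely convergent (the inner $u$-integral of $e^{(\lambda-\tau)u}$ diverges), so you must genuinely invoke the classical inversion theorem and justify the deformation from $\bm V_{\delta'}$ to the vertical lines using growth bounds of $\mathcal L_{\bm c}[I]$ in the part of $\bm\Omega_\delta$ where $\Re u_j<0$. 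One concrete error already occurs in your Part (2): $\mathcal L_{\bm c}[I]$ does \emph{not} satisfy a polynomial-type estimate on $\bm V_{\delta'}$. Take $d=1$, $I\equiv 1$: then $\mathcal L_c[I](u)=e^{-cu}/u$, which on the rays $\arg u=\pm(\frac\pi2+\delta')$ grows like $e^{c|u|\sin\delta'}$. Only an exponential bound of the form $e^{m(|u_1|+\cdots+|u_d|)}$ holds (this is what the paper proves), and that is precisely why $\mathcal B\circ\mathcal L_{\bm c}[I]$ is only defined for $|\lambda_j|$ large; if your polynomial bound were true, the restriction $|\lambda_j|>M$ in the statement would be unnecessary, which is inconsistent with your own final step.

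The serious gap is the central identity of your treatment of $\mathcal L_{\bm c}\circ\mathcal B[H]=H+E_{\bm c}$: the claim $\int_{\bm\Gamma}e^{-\bm\lambda\cdot\bm u}\mathcal B[H](\bm\lambda)\,d\bm\lambda=H(\bm u)$ is false in general, and it cannot be deduced as "a limiting form of the first identity of (3)", because that identity concerns the opposite composition $\mathcal B\circ\mathcal L_{\bm c}$, and $\mathcal B$ is not injective on $\mathcal P(\bm\Omega_\delta)$. Counterexample with no singularity at the origin at all: $d=1$, $H\equiv 1$. Then $\mathcal B[H](\lambda)=\frac{1}{2\pi i}\int_{V_{\delta'}}e^{\lambda v}\,dv=0$ (an antiderivative $e^{\lambda v}/\lambda$ vanishes at both ends of the contour), so every term in your inclusion–exclusion vanishes, whereas $H=1$; here the correct error term is $E_c=-1$. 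More generally $\mathcal B$ annihilates a large class of demi-entire functions (cf.\ the paper's Corollary on the scaling function), so $\mathcal L_{\bm 0}\circ\mathcal B\neq\mathrm{id}$, and the demi-entire discrepancy is \emph{not} produced only by the truncation pieces $A_S$ near $\bm\lambda=\bm 0$: it also comes from infinity. In the paper's proof this is transparent after reducing to $d=1$ via the commuting factorization $\mathcal B=\mathcal B^{(1)}\circ\cdots\circ\mathcal B^{(d)}$, $\mathcal L_{\bm c}=\mathcal L^{(1)}_{c_1}\circ\cdots\circ\mathcal L^{(d)}_{c_d}$: a legitimate Fubini gives $\mathcal L_c\circ\mathcal B[H](u)=\frac{1}{2\pi i}\int_{V_{\delta'}}H(v)\,\frac{e^{-(u-v)c}}{u-v}\,dv$, and the entire error $E_c(u)$ is the limit of the same integral taken over the contour obtained by closing $V_{\delta'}$ around a disk of radius $R\to\infty$; the residue at $v=u$ then yields $\mathcal L_c\circ\mathcal B[H]=H+E_c$, with $E_c$ generally nonzero even when $\mathcal B[H]$ is regular at the origin. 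Your scheme would need to be reorganized along these lines; the bookkeeping you attempt with the sets $S$ and $j=\min S$ is then replaced by iterating the univariate identity coordinate by coordinate, each step contributing a term entire in one variable, which is exactly how the paper assembles the demi-entire $E_{\bm c}$.
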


\begin{corollary}\label{cor:scaling function properties}
The scaling function $I(\bm \lambda)$ in Theorem~\refp2{thm:m-transfer} is \thetahomo[-\Theta] and in $\mathcal P(\kdom[\delta^\circ])$ for all $\delta^\circ \in (0,\delta)$. It is identically zero \Iff\ $H$ is demi-entire on $\odom$.
\end{corollary}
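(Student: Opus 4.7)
My plan is to prove the corollary in three steps. First, analyticity of $I$ on $\kdom[\delta^\circ]$ and the inclusion $I\in \poly(\kdom[\delta^\circ])$ for every $\delta^\circ\in (0,\delta)$ are immediate from Part~1 of \Cref{thm:Borel-Laplace}, since $I=\borel[H]$ by definition and $H\in \poly(\odom)$ by hypothesis. Second, to prove $(-\Theta,\bm\theta)$-homogeneity, I start from
\begin{equation*}
I(\sigma^{\bm\theta}\bm\lambda)=\pid \int_{\vv_{\delta'}} H(\bm u)\,e^{\sigma^{\bm\theta}\bm{\lambda\cdot u}}\,\dd \bm u
\end{equation*}
for a fixed real $\sigma>0$ and substitute $\bm u=\sigma^{-\bm\theta}\bm v$. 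Then $\sigma^{\bm\theta}\bm{\lambda\cdot u}$ collapses to $\bm{\lambda\cdot v}$, the Jacobian contributes $\sigma^{-(\theta_1+\cdots+\theta_d)}$, and $(\theta_0,\bm\theta)$-homogeneity of $H$ contributes $\sigma^{-\theta_0}$. The transformed contour $(\sigma^{\theta_j}V_{\delta'})_j$ still lies in $\odom$ and has unchanged ray arguments at infinity (because $\sigma^{\theta_j}>0$), so the contour-independence statement in Part~1 of \Cref{thm:Borel-Laplace} lets me replace it back by $\vv_{\delta'}$, yielding $I(\sigma^{\bm\theta}\bm\lambda)=\sigma^{-\Theta}I(\bm\lambda)$.

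Third, the equivalence combines two easier implications. If $H$ is demi-entire on $\odom$, then it is in particular demi-analytic at $\bm 0\in \odom$, and the remark preceding the corollary (itself derived from Parts~1--2 of \Cref{thm:m-transfer}: the exponential decay of coefficients supplied by Part~1 rules out any non-zero leading term $I(\bm\lambda)\,n_0^{-\Theta}$ from Part~2) forces $I(\bm\lambda)=0$ for all $\bm\lambda\in \real_{>0}^d$; analytic continuation through Step~1 then gives $I\equiv 0$ on $\kdom[\delta^\circ]$. Conversely, suppose $I\equiv 0$; fix any $\bm c\in \real_{>0}^d$ and apply Part~3 of \Cref{thm:Borel-Laplace}: the identity $\laplace\circ\borel[H]=H+E_{\bm c}$ combined with $\borel[H]\equiv 0$ (which makes the left-hand side identically zero) yields $H=-E_{\bm c}$, a demi-entire function. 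The only point that demands care is the contour-independence invoked in Step~2; once one checks that positive real rescaling preserves each ray direction in $\odom$, the corollary reduces to a routine combination of \Cref{thm:m-transfer,thm:Borel-Laplace}.
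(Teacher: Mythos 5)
Your argument is correct and is essentially the proof the paper intends: homogeneity of $I$ via the rescaling $\bm u=\sigma^{-\bm\theta}\bm v$ combined with the contour-independence in \Crefp{1}{thm:Borel-Laplace}, membership in $\poly(\kdom[\delta^\circ])$ from that same part, and the equivalence obtained by combining the fourth remark below \Cref{thm:m-transfer bis} (i.e.\ Parts~1--2 of \Cref{thm:m-transfer}, plus analytic continuation from $\real_{>0}^d$ to $\kdom$) with the identity $\laplace{}\circ\borel[H]=H+E_{\bm c}$ of \Crefp{3}{thm:Borel-Laplace}. The only point you state too quickly is that $H\in\poly(\odom)$ is not literally a hypothesis of \Crefp{2}{thm:m-transfer} --- it follows from $A\in\poly(\ddom)$ together with the homogeneity of $H$ via \Cref{lem:homogeneous monomial bounds}, and this is what licenses applying Parts~1 and~3 of \Cref{thm:Borel-Laplace} to $H$.
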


\paragraph{Outline of the rest of the paper.}
\Cref{sec:main proof,sec:Borel-Laplace} give the proofs of \Cref{thm:m-transfer bis,thm:Borel-Laplace}, respectively.
In \Cref{sec:discussions}, I first provide some additional results on the classes of multivariate functions mentioned above \Cref{thm:m-transfer}, and then discuss the background of this paper and its relations to previous works.

\paragraph{Acknowledgement.}
The author is grateful for the support of the ETH Foundation.
This work has also been supported by the Swiss National Science Foundation (SNF) Grant 175505 and the Agence Nationale de la Recherche project ProGraM (Projet-ANR-19-CE40-0025).

\begin{figure}
\centering
\includegraphics[scale=1,page=1]{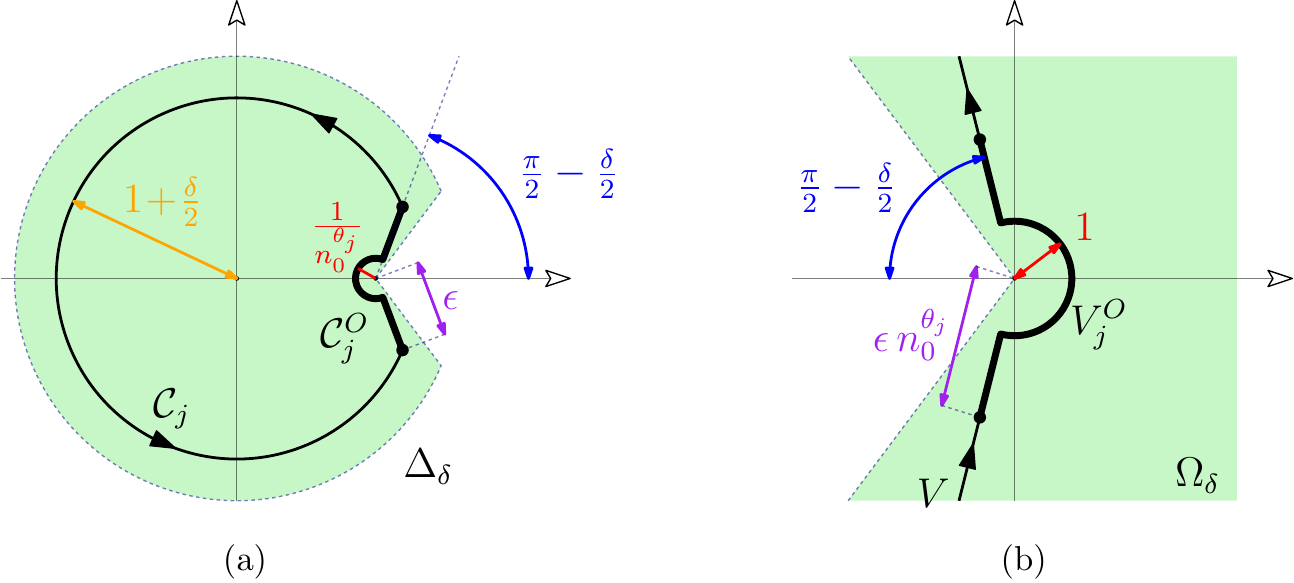}
\caption{(a) The contour $\cj$ is composed of two circle arcs and two line segments. The two arcs have their centers at $0$ and $1$, and their radii $1+\delta/2$ and $1/\nj$, respectively. The line segments connect the two arcs and form an angle $\frac\pi2-\frac\delta2$ with the positive real axis at $1$. The contour~$\cj^O$ is $\cj$ with the large arc removed.
(b) The contour $\vj$ is the image of $\mathcal C_j^O$ under the mapping $z \mapsto \nj(1-z)$, and $V$ is the limit of $\vj$ as $n_0\to \infty$ (which no longer depends on~$j$). $V$ is a special case of the curve $V_{\delta'}$ appeared in the definition \eqref{eq:m-transfer scaling function} of $I(\bm \lambda)$ in \Cref{thm:m-transfer}.
}
\label{fig:Delta/cone contour definition}
\end{figure}

\section{Proof of the multivariate transfer theorems}\label{sec:main proof}

\paragraph{Contours and domains.}
We start by defining some contours and domains useful in the proof. Recall the definition~\eqref{eq:def Delta domain} of the $\Delta$-domain $\dom$. For given values of $n_0\gg 1$ and $\theta_j>0$, we define the contours $\cj$ and $\cj^O$ by Figure~\refp{a}{fig:Delta/cone contour definition}: $\cj$ is a contour inside $\ddom$ and close to its boundary, while $\cj^O$ is a portion of~$\cj$ close to the point $z=1$. We denote by $\epsilon=\epsilon(\delta)$ the maximal distance from a point of $\cj^O$ to $1$.
Let $\vj$ be the image of $\cj^O$ under the mapping $z\mapsto \nj(1-z)$, and $V$ be the limit of $\vj$ as $n_0\to \infty$.
We have $\vj \subset V \subset \Omega_\delta$, see Figure~\refp{b}{fig:Delta/cone contour definition}.
The multivariate versions of these objects are defined by
\begin{align*}
  \cc   &= \cj[1] \times \cdots \times \cj[d]
& \cc^O &= \cj[1]^O \times \cdots \times \cj[d]^O \\
  \vv   &= V \times \cdots \times V
& \vv^O &= \vj[1] \times \cdots \times \vj[d] \,.
\end{align*}

\paragraph{Uniform growth bounds.}
Now let us derive some uniform bounds of functions on the above contours and domains.
Recall that in the \diag\ limit, the vector $\bm \lambda = \nni \bm n \equiv (\nji[1]n_1,\ldots,\nji[d]n_d)$ remains in some compact subset of $(0,\infty)^d$ as $n_0\to \infty$.
Let $\lambda\1{min}, \lambda\1{max} >0$ be such that $\lambda_j\in [\lambda\1{min},\lambda\1{max}]$ for all $1\le j\le d$.
Let $\theta\1{min} = \min \{\theta_1,\ldots,\theta_d\}$ and $\theta\1{max} = \max \{\theta_1,\ldots,\theta_d\}$. We assume $n_0\ge 2$.

\newcommand{\bb}[1][1]{\bm{B}_{\bm{#1},\epsilon}}
We assume \wlg\ that $\delta$, and therefore $\epsilon$, is small enough so that all the local assumptions near $\bm z=\bm 1$ in Theorem~\ref{thm:m-transfer bis} hold globally in $\bb := \set{\bm z\in \complex^d}{\forall 1\le j\le d ,\, |z_j-1|<\epsilon}$. In other words, the functions $A\1{reg}$ and $\bm z \mapsto H_k(\bm 1-\bm z)$ are well-defined on $\ddom \cap \bb$ and satisfy
\begin{equation}\label{eq:main proof F(z) domain bound}
\forall \bm z\in \ddom \cap \bb ,\quad
|F(\bm z)| \le C \cdot \m({ |z_1-1|^{-M} + \cdots + |z_d-1|^{-M} } \,,
\end{equation}
and there exist functions $A\1{reg,\mathnormal j}$ analytic in $(\dom^{j-1} \times \complex \times \dom^{d-j}) \cap \bb$ such that $A\1{reg} = A\1{reg,\mathrm 1} + \cdots + A\1{reg,\mathnormal d}$. Notice that $\cc^O \subset \ddom \cap \bb$ and $|z_j-1|\ge \nji$ for all $\bm z\in \cc$. So the bound \eqref{eq:main proof F(z) domain bound} implies that
\begin{equation}\label{eq:main proof F(z) contour bound}
\forall \bm z\in \cc^O ,\quad
|F(\bm z)| \le C\cdot \m({n_0^{M \theta_1} + \cdots + n_0^{M \theta_d}}
\le dC \cdot n_0^{M \theta\1{max}} .
\end{equation}
Since $A$ is of polynomial type \emph{globally} on $\ddom$, it satisfies \eqref{eq:main proof F(z) domain bound} for all $\bm z\in \ddom$ and \eqref{eq:main proof F(z) contour bound} for all~$\bm z\in \cc$.
We leave the reader to check the elementary fact that there exist $c,\mu>0$ depending only on $\delta$, $\theta\1{min}$ and $\lambda\1{min}$, $\lambda\1{max}$, such that
\begin{equation}\label{eq:main proof z^n bound}
\forall z\in \cj^O ,\quad
\mb|{z^{n_j+1}} \ge c \cdot \exp \mb({\mu \cdot \nj \Re(z-1)}  \,.
\end{equation}
And since $\Re(z-1) \ge \nji\!$ for $z\in \cj^O$ and $|z|>1$ for $z\in \cj \setminus \cj^O$, there exists $\tilde c=\tilde c(\delta,\theta\1{min},\lambda\1{min},\lambda\1{max})>0$ such that
\begin{equation}\label{eq:main proof z^n constant bound}
\forall z \in \cj ,\quad
\mb|{z^{n_j+1}}\ge \tilde c \,.
\end{equation}
Under the change of variable $u=\nj(1-z)$, the bound \eqref{eq:main proof z^n bound} becomes
\begin{equation}\label{eq:main proof (1-u)^n bound}
\forall u \in \vj ,\quad
\mb|{ (1-\nji u)^{n_j+1} } \ge c \cdot e^{- \mu \cdot \Re(u) } \,.
\end{equation}
In \Cref{sec:discussions}, we will discuss several equivalent formulations of the polynomial-type condition, and their consequences on homogeneous functions. In particular, we will see in \Cref{lem:homogeneous monomial bounds} that a homogeneous function of polynomial type \emph{locally at $\bm 0\in \odom$} is always bounded by a Laurent polynomial \emph{globally~on~$\odom$}. An easy consequence of \Cref{lem:homogeneous monomial bounds} is the following global polynomial bound for the functions $H_0,\ldots, H_m$:
there exists $C,M>0$ such that
\begin{equation}\label{eq:main proof H(u) bound}
\forall \bm u \in \vv,\quad
|H_k(\bm u)| \le C\cdot |u_1\cdots u_d|^M
\end{equation}

\paragraph{Analyticity of the homogeneous components.}
Before starting the proof of the main theorems, let us show the analyticity of the functions $H_k$ ($0\le k<m$) mentioned in the 7-th remark below~Theorem~\ref{thm:m-transfer bis}. Actually, we show that they are analytic on the larger domain $\odom$, thanks to homogeneity.

\begin{lemma}\label{lem:H_k analytic}
The functions $H_0,\ldots,H_{m-1}$ in  Theorem~\ref{thm:m-transfer bis} have analytic continuations on $\odom$.
\end{lemma}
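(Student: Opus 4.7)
The plan is to extract each $H_k$ iteratively from the singular part
$A^{\mathrm{sing}}(\bm w) := A(\bm 1-\bm w) - A^{\mathrm{reg}}(\bm 1-\bm w)$
as a uniform limit of analytic functions on a neighborhood of $\bm 0$ in $\odom$, and then to spread the analyticity to all of $\odom$ via the scaling relation $H_k(\sigma^{\bm \theta}\bm u) = \sigma^{\theta_0^{(k)}} H_k(\bm u)$.

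Since $A$ is analytic on $\ddom$ and $A^{\mathrm{reg}}$ is analytic on a neighborhood of $\bm 1$ in $\ddom$ (as part of the demi-analyticity assumption), the function $A^{\mathrm{sing}}$ is analytic on an open set containing $\odom \cap \bm B_{\bm 0,\epsilon}$ for some small $\epsilon > 0$, using that $\bm 1 - \ddom$ locally coincides with $\odom$ near $\bm 0$. The expansion \eqref{eq:m-transfer fun expansion} then translates to $A^{\mathrm{sing}}(\bm w) = H_0(\bm w) + \cdots + H_{m-1}(\bm w) + o(H_m(\bm w))$ as $\bm w \to \bm 0$ in $\odom$, and I will use it in the form of uniform convergence for $\bm w$ of the shape $\varepsilon^{\bm \theta}\bm u$ with $\bm u$ in a compact $K \subset \odom$ and $\varepsilon \to 0^+$.

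I would then induct on $k \in \{0,\dots,m-1\}$. Assuming $H_0,\dots,H_{k-1}$ have already been shown analytic on $\odom \cap \bm B_{\bm 0,\epsilon}$, define
\[
\Phi_k(\varepsilon,\bm u) \;:=\; \varepsilon^{-\theta_0^{(k)}} \bigg( A^{\mathrm{sing}}(\varepsilon^{\bm \theta}\bm u) - \sum_{j=0}^{k-1} \varepsilon^{\theta_0^{(j)}} H_j(\bm u) \bigg).
\]
For each small fixed $\varepsilon > 0$ and $\bm u$ in a neighborhood of $K$, $\Phi_k(\varepsilon,\cdot)$ is analytic in $\bm u$. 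Substituting the expansion of $A^{\mathrm{sing}}$ and exploiting the homogeneity $H_j(\varepsilon^{\bm \theta}\bm u) = \varepsilon^{\theta_0^{(j)}} H_j(\bm u)$ gives
\[
\Phi_k(\varepsilon,\bm u) \;=\; H_k(\bm u) + \sum_{j=k+1}^{m-1} \varepsilon^{\theta_0^{(j)} - \theta_0^{(k)}} H_j(\bm u) + o\bigl(\varepsilon^{\theta_0^{(m)}-\theta_0^{(k)}}\bigr),
\]
which tends to $H_k(\bm u)$ uniformly on $K$ as $\varepsilon \to 0^+$ (with the ordering of the exponents $\theta_0^{(j)}$ that makes the asymptotic expansion genuine). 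Weierstrass's theorem on uniform limits of analytic functions then yields that $H_k$ is analytic on a neighborhood of $K$, and since $K$ was an arbitrary compact subset of $\odom \cap \bm B_{\bm 0,\epsilon}$, $H_k$ is analytic there.

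Finally, for any $\bm u_0 \in \odom$, pick $\sigma > 0$ such that $\sigma^{\bm \theta}\bm u_0 \in \odom \cap \bm B_{\bm 0,\epsilon}$ and set $H_k(\bm u) := \sigma^{-\theta_0^{(k)}} H_k(\sigma^{\bm \theta}\bm u)$ for $\bm u$ near $\bm u_0$; this is analytic as a linear change of variables composed with an analytic function, is independent of $\sigma$ by homogeneity, and therefore extends $H_k$ analytically to all of $\odom$. I expect the main obstacle to be justifying the uniformity of the error in the asymptotic expansion over the compact slices $\{\varepsilon^{\bm \theta}\bm u : \bm u \in K\}$ as $\varepsilon \to 0^+$; this uniformity is consistent with (and I read as the intended meaning of) the convergence ``$\bm z \to \bm 1$ in $\ddom$'' appearing in \Cref{thm:m-transfer bis}, and mirrors the uniformity of the coefficient asymptotics stated later in the same theorem.
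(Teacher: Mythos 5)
Your proof is correct and takes essentially the same route as the paper: the paper also extracts each $H_k$ iteratively as the $\sigma\to 0^+$ limit of $\sigma^{-\theta_0^{(k)}}$ times the rescaled singular part $A(\bm z)-A^{\mathrm{reg}}(\bm z)$ on $\odom\cap\bm B_{\bm 0,\epsilon}$ and then extends by homogeneity, the only difference being that it obtains local uniform convergence from pointwise convergence plus uniform boundedness via Vitali's theorem, whereas you argue the uniformity directly and invoke Weierstrass. Note only that the uniformity you flag as the main obstacle also uses the hypothesis that $H_{k+1},\ldots,H_m$ are of polynomial type locally at $\bm 0$ (hence bounded on compact subsets of $\odom$), which is precisely the bound the paper invokes to control the rescaled family.
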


\begin{proof}
Recall that by our assumption on the smallness of $\epsilon$, the function $A\1{reg}$ is analytic on $\ddom \cap \bb$.
Let $\bb[0]=\setn{\bm 1-\bm z}{\bm z \in \bb} \equiv \setn{\bm z\in \complex^d}{\forall 1\le j\le d,\, |z_j|<\epsilon}$ and $A\1{sing}(\bm 1-\bm z)=A(\bm z)-A\1{reg}(\bm z)$ and $f_\sigma(\bm u) = \sigma^{-\theta_0\00} A\1{sing}(\sigma^{\bm \theta} \bm u)$. Then $f_\sigma$ is analytic on $\odom \cap \bb[0]$ for all $0<\sigma\le 1$, and the expansion \eqref{eq:m-transfer fun expansion} implies that $H_0(\bm u) = \lim_{\sigma\to 0^+} f_\sigma(\bm u)$ for all $\bm u \in \odom \cap \bb[0]$.
Moreover, thanks to the polynomial-type bound, $H_0,\ldots,H_m$ are bounded on all compact subsets of $\odom \cap \bb[0]$. It follows that the family $(f_\sigma)_{\sigma\in (0,1]}$ is uniformly bounded there.
By Vitali's theorem, the convergence $f_\sigma(\bm u) \cvg{}{\sigma \to 0} H_0(\bm u)$ is uniform on compact subsets of $\odom \cap \bb[0]$. Therefore $H_0$ is analytic on $\odom \cap \bb[0]$.

The same argument applied  to $A\1{sing}\!-H_0$, $A\1{sing}\!-H_0-H_1$, etc.~shows that $H_1,\ldots,H_{m-1}$ are~also analytic on $\odom \cap \bb[0]$. The homogeneity property $H_k(\sigma^{\bm \theta} \bm u)=\sigma^{\theta_0\0k}H_k(\bm u)$ extends this to all $\bm u \in \odom$.
\end{proof}

\paragraph{Proof of Theorem~\ref{thm:m-transfer bis}.}
Now let us prove Theorem~\ref{thm:m-transfer bis}. The proof will also imply \Cref{thm:m-transfer} by considering cases in which only one term on the \rhs\ of \eqref{eq:m-transfer fun expansion} is nonzero.
We follow the same general steps as the proof of the univariate transfer theorem in \cite{FlajoletOdlyzko1990}.

\vspace{-1em}
\paragraph{Step~1.~Contour deformation and localization.}
The coefficient $\an$ are related to $A(\bm z)$ by the Cauchy integral formula 
\begin{equation}\label{eq:Cauchy integral formula}
\an = \pid \oint_{\bm T_r} \frac{A(\bm z)}{\bm z^{\bm n+\bm 1}}\dd \bm z
\end{equation}
where $\bm T_r = \set{\bm z\in \complex^d}{\forall j,\, |z_j|=r}$ is the polytorus of radius $r$ for some $r>0$ small enough.
Thanks to the analyticity of $A(\bm z)$ in $\ddom$, we can deform the contour in \eqref{eq:Cauchy integral formula} from $\bm T_r$ to $\cc$. By spliting the integral according to the partition $\cc = \cc^O \cup (\cc \setminus \cc^O)$, we obtain
\begin{equation}
\an = I\1{loc} + \pid \int_{\cc \setminus \cc^O} \frac{A(\bm z)}{\bm z^{\bm n+\bm 1}}\dd \bm z
\qt{where}\quad
I\1{loc} = \pid \int_{\cc^O} \frac{A(\bm z)}{\bm z^{\bm n+\bm 1}}\dd \bm z \,.
\end{equation}
By the definition of $\cc^O$, for each $\bm z \in \cc \setminus \cc^O$, there is at least one $j^*\in \range d$ such that $|z_{j^*}|=1+\delta/2$. Together with the growth bounds \eqref{eq:main proof F(z) contour bound}~and~\eqref{eq:main proof z^n constant bound}, this implies
\begin{equation}\label{eq:main proof step 1 bound}
\forall \bm z\in \cc \setminus \cc^O,\quad
\abs{ \frac{A(\bm z)}{\bm z^{\bm n+\bm 1}} }
\le \frac{dC \cdot n_0^{M\theta\1{max}}}{
          \tilde c^{\,d-1}\cdot (1+\delta/2)^{n_{j^*}+1} }
\le \frac{\tilde C \cdot n_0^{M\theta\1{max}}}{
          (1+\delta/2)^{\lambda\1{min}\cdot n_0^{\theta\1{min}}} }
\le \mathtt C \cdot r^{n_0^{\theta\1{min}}}
\end{equation}
for some constants $r<1$ and $\mathtt C$ independent of $n_0$. It follows that
\begin{equation}\label{eq:main proof localization estimate}
\an = I\1{loc} + O \mb({ r^{n_0^{\theta\1{min}}} }
\end{equation}
for some constant $r\in (0,1)$ as $n_0\to\infty$.

\vspace{-1em}
\paragraph{Step~2.~Removing the demi-analytic term.}
Let $A\1{sing}(\bm 1-\bm z) = A(\bm z)-A\1{reg}(\bm z)$. Then we have
\begin{equation}
I\1{loc} = I\1{reg} + I\1{sing} :=
  \pid \int_{\cc^O} \frac{A\1{reg}(\bm z)}{\bm z^{\bm n+\bm 1}}\dd \bm z
+ \pid \int_{\cc^O} \frac{A\1{sing}(\bm 1-\bm z)}{\bm z^{\bm n+\bm 1}}\dd \bm z
\end{equation}
Recall that the demi-analytic term $A\1{reg}$ admits a decomposition $A\1{reg} = A\1{reg,\mathrm 1} + \cdots + A\1{reg,\mathnormal d}$, where each  $A\1{reg,\mathnormal j}$ is analytic in $(\dom^{j-1}\times \complex \times \dom^{d-j}) \cap \bb$. Within this domain, we can deform the $j$-th component the contour $\cc^O \equiv \cj[1]^O \times \cdots \times \cj[d]^O$ to the arc $\tilde{\mathcal C}_j^O$ on the circle $\Set{z\in \complex}{|z|=1+\delta/2}$ with the same endpoints as $\cj^O$. Then, with the same reasoning as in Step~1, we obtain
\begin{equation}
\int_{\cc^O} \frac{A\1{reg,\mathnormal j}(\bm z)}{\bm z^{\bm n+\bm 1}} \dd z
= \int_{\cj[1]^O\times \cdots \times \tilde{\mathcal C}_j^O \times \cdots \times \cj[d]^O} \frac{A\1{reg,\mathnormal j}(\bm z)}{\bm z^{\bm n+\bm 1}} \dd z
= O(r^{n_0^{\theta\1{min}}})
\end{equation}
for some $r\in (0,1)$. Summing over $j$ gives $I\1{reg} =  O(r^{n_0^{\theta\1{min}}})$. Together with Step~1, this implies
\begin{equation}
\an = I\1{sing} + O(r^{n_0^{\theta\1{min}}})
\end{equation}
for some constant $r\in (0,1)$ as $n_0\to \infty$.
In particular, when $A\1{sing}=0$, we obtain Case 1 of \Cref{thm:m-transfer}.

\vspace{-1em}
\paragraph{Step~3.~Transfer.}
By linearity, the expansion \eqref{eq:m-transfer fun expansion} implies $I\1{sing} = I_0 + \cdots +I_m$, where
\begin{equation}
I_k = \pid \!\!\int_{\cc^O}\!\! \frac{H_k(\bm 1-\bm z)}{\bm z^{\bm n+\bm 1}} \dd \bm z
\quad \text{for }0\le k<m
\qtq{and}
I_m = \pid \!\!\int_{\cc^O}\!\! \frac{o(H_m(\bm 1-\bm z))}{\bm z^{\bm n+\bm 1}} \dd \bm z
\end{equation}
Thanks to the analyticity of $H_0,\ldots,H_{m-1}$ in Lemma~\ref{lem:H_k analytic}, we can apply Step~1 to them to obtain
\begin{equation}
\an[H_k(\bm 1-\bm z)] = I_k + O(r^{n_0^{\theta\1{min}}}) \qt{for all }0\le k<m.
\end{equation}

Now let us show that $I_m=o(n_0^{-\Theta_m})$. To make use of the little-o estimate in the integrand, we want to further localize the contour $\cc^O$:
For a function $\varepsilon(x)\cv[]x 0$, we define the little-o versions of the contours $\cj^O$~and~$\cc^O$ by
\begin{equation}
\cj^o = \Setn{z\in \cj^O}{|z-1|<\varepsilon(n_0)}
\qtq{and}
\cc^o = \cj[1]^o \times \cdots \times \cj[d]^o \,.
\end{equation}
Contrary to $\cc^O$\!, which has a small but fixed size $\epsilon$, the contour $\cc^o$ shrinks to the point $\bm 1$ when $n_0\to \infty$.
According to the above definition, for all $\bm z \in \cc^O\setminus \cc^o$, there exists at least one $j^* \in \range d$ such that $|z_{j^*}| \ge 1+\sin(\delta/2)\cdot \varepsilon(n_0)$.
Then, similarly to \eqref{eq:main proof step 1 bound}, we have
\begin{equation}
\forall \cc^O \setminus \cc^o,\quad
\abs{\frac{H_m(\bm 1-\bm z)}{\bm z^{\bm n+\bm 1}}}
\le \frac{ \tilde C n_0^{M\theta\1{max}}
        }{ \mb({ 1+\sin(\delta/2) \cdot \varepsilon(n_0) }^{
           \lambda\1{min} \cdot n_0^{ \theta\1{min} } }}
\le \mathtt C \cdot
    r^{ \varepsilon(n_0) \cdot n_0^{\theta\1{min}} }
\end{equation}
for some $r<1$ and $\mathtt C>0$. We choose a function $\varepsilon(x)$ such that $\varepsilon(x)\gg x^{-\theta\1{min}}\log(x)$ when $x\to \infty$. (For example, $\varepsilon(x)=x^{\theta\1{min}/2}$.) Then the \rhs\ of the above display decays faster than any negative power of $n_0$ when $n_0\to \infty$. In particular, this implies
\begin{equation}\label{eq:m-transfer proof little-o localization}
\int_{\cc^O \setminus \cc^o} \frac{o(H_m(\bm 1-\bm z))}{\bm z^{\bm n+\bm 1}} \dd \bm z = o\m({ n_0^{-\Theta_m} }.
\end{equation}
as $n_0\to \infty$.
On the other hand, since the contour $\cc^o$ shrinks to $\bm z=\bm 1$ when $n_0\to \infty$, for any $\tau>0$, there exists $n_0^*$ such that
\begin{equation}
\forall \bm z\in \cc^o,\quad
\abs{o(H_m(\bm 1-\bm z))}\ \le\ \tau \abs{H_m(\bm 1-\bm z)}
\end{equation}
for all $n_0\ge n_0^*$. By plugging this inequality into the integral over $\cc^o$ and making the change of variable $\bm u=\nn(\bm 1-\bm z) \equiv (\nj[1](1-z_1),\ldots, \nj[d](1-z_d))$, we obtain
\begin{equation}\label{eq:m-transfer proof little-o transfer}
\abs{ \int_{\cc^o} \frac{o(H_m(\bm 1-\bm z))}{\bm z^{\bm n+\bm 1}} \dd \bm z }
\le \tau \int_{\vv^o} \m|{
    \frac{H_m(\nni \bm u)}{
          \mb({ \bm 1-\nni \bm u }^{\bm n+\bm 1} }
    } \frac{\abs{\dd \bm u}}{n_0^{\theta_1+\cdots+\theta_d}}
= \frac{\tau}{\,\,n_0^{\Theta_m}} \int_{\vv^o} \mh|{
    \frac{H_m(\bm u)}{
          \mb({ \bm 1-\nni \bm u }^{\bm n+\bm 1} }
    } \abs{\dd \bm u}
\end{equation}
for $n_0\ge n_0^*$, where $\vv^o$ is the image of $\cc^o$ under the change of variable, and the last equality used the \thetas[\theta_0\0m]-homogeneity of $H_m$.
The bounds \eqref{eq:main proof (1-u)^n bound}~and~\eqref{eq:main proof H(u) bound} imply that the integrand on the \rhs\ is bounded by
\begin{equation}
\frac{C |u_1\cdots u_d|^M}{c^d e^{-\mu \cdot \Re(u_1+\cdots +u_d)}}
\,=\, c^{-d}C \prod_{j=1}^d |u_j|^M e^{\mu \Re(u_j)} \,,
\end{equation}
which is integrable on $\vv \supset \vv^o$. Hence the inegral on $\vv^o$ is bounded by a constant independent of $n_0$. It follows that $\int_{\cc^o} \frac{o(H_m(\bm 1-\bm z))}{\bm z^{\bm n+\bm 1}} \dd \bm z = o(n^{-\Theta_m})$,
Adding this to \eqref{eq:m-transfer proof little-o localization} gives $I_m=o(n_0^{-\Theta_m})$.
Together with the conclusions of Steps~2~and~3, this implies the asymptotic expansion \eqref{eq:m-transfer coeff expansion} in Theorem~\ref{thm:m-transfer bis}.

When the little-o estimate in \eqref{eq:m-transfer fun expansion} is replaced by a big-O, the same proof (actually simpler, since one no longer needs to localize $\cc^O$ to $\cc^o$) shows the expansion \eqref{eq:m-transfer coeff expansion} with $o(n_0^{-\Theta_m})$ replaced by $O(n_0^{-\Theta_m})$.

\pdfstringdefDisableCommands{\renewcommand{\bm}[1]{#1}}
\vspace{-1em}
\paragraph{Step~4.~Coefficient asymptotics of homogeneous functions.}
It remains to prove the asymptotic expansion \eqref{eq:m-transfer power expansion} for a general \thetahomo\ function $H$ satisfying the assumptions of \Cref{thm:m-transfer}. With the same argument as in Step~3, there exists $r\in (0,1)$ such that
\begin{equation}\label{eq:main proof defining J}
\an[H(\bm 1-\bm z)] = J + O(r^{n_0^{\theta\1{min}}})
\qtq{with}
J := \pid \int_{\cc^O} \frac{H(\bm 1-\bm z)}{\bm z^{\bm n+\bm 1}}\dd \bm z
\end{equation}
We perform the same change of variable as in Step~3, which gives
\begin{equation}\label{eq:main proof H coeff}
J= \pid \int_{\vv^O} \frac{
    H(\nni \bm u)}{(1-\nni \bm u)^{\bm n + \bm 1}
    } \frac{\dd \bm u}{n_0^{\theta_1+\cdots + \theta_d}}
 = \frac1{n_0^\Theta} \cdot \pid \int_{\vv^O} \frac{
    H(\bm u)}{(1-\nni \bm u)^{\bm n + \bm 1}
    } \dd \bm u
\end{equation}
where $\Theta=\theta_0+\cdots+\theta_d$ as defined in \Cref{thm:m-transfer}.
According to the definition \eqref{eq:u-transfer g-factors} of the coefficients $g_{k,l}$, we have
\begin{equation}\label{eq:main proof G expansion}
\m({ 1-n_0^{-\theta}u }^{-\lambda n_0^\theta - 1}
  =\ e^{\lambda u}\, G\m({n_0^{-\theta}u, \lambda u}
\ =\ e^{\lambda u} \sum_{k=0}^\infty \m({ \sum_{l=0}^k g_{k,l}\,\lambda^l u^{k+l} } \frac1{n_0^{k\cdot \theta}} \,.
\end{equation}
Applying the above formula to each factor of the product
$\prod_{j=1}^d \mb({1-\nji u_j}^{-n_j-1}$ gives
\begin{equation}\label{eq:main proof G vectorial expansion}
\frac{ 1}{ \mn({ \bm 1-\nni\bm u }^{\bm n +\bm 1} }
=\ e^{\bm{\lambda \cdot u}} \sum_{\bm k\in \natural^d} \mH({
     \sum_{\bm l\le \bm k} g_{\bm k,\bm l}\,
        \bm{\lambda^l} \bm u^{\bm k+\bm l}
     } \frac1{n_0^{\bm{k \cdot \theta}} } \,.
\end{equation}
If we treat the \rhs\ as a formal sum over $\bm k$, and approximate the contour $\vv^O$ by its limit $\vv$, then \eqref{eq:main proof H coeff} would imply heuristically that:
\begin{align*}
J\,&\approx\,\frac{1}{n^\Theta} \sum_{\bm k\in \natural^d}
           \frac{1}{n_0^{\bm{k\cdot \theta}}} \cdot
           \pid \int_{\vv}
           \mH({ \sum_{\bm l\le \bm k} g_{\bm k,\bm l}\,
                      \bm{\lambda^l} \bm u^{\bm k+\bm l}
               }
           e^{\bm{\lambda \cdot u}} H(\bm u) \dd \bm u
\\&=\,     \frac{1}{n^\Theta} \sum_{\bm k\in \natural^d}
           \frac{1}{n_0^{\bm{k\cdot \theta}}} \cdot
           \sum_{\bm l\le \bm k} g_{\bm k,\bm l}\,
                \bm{\lambda^l}
                \bm{\partial_\lambda}^{\bm k+\bm l}
                \m({ \pid \int_{\vv}
                     e^{\bm{\lambda \cdot u}}
                     H(\bm u) \dd \bm u
                   }
\\&=\,\frac{1}{n^\Theta} \sum_{\bm k\in \natural^d}
     \frac{1}{n_0^{\bm{k\cdot \theta}}} \cdot
     D_{\bm k} I(\bm \lambda) \,.
\end{align*}
where the function $I(\bm \lambda)$ and the differential operator $D_{\bm k}$ are defined as in \Cref{thm:m-transfer}.

Now let us show that the last line is indeed an asymptotic expansion of $J$. In other words, for any~$N>0$, we have
\begin{equation}\label{eq:main proof truncated J asymptotics}
n_0^\Theta\cdot J = \,
    \sum_{\bm{k\cdot \theta}<N}
    \frac{D_{\bm k} I(\bm \lambda)
        }{n_0^{\bm{k\cdot \theta}}
        } + O\m({ \frac1{n_0^N} }
\end{equation}
when $n_0\to \infty$. For this, let us go back to \eqref{eq:main proof G expansion}. It can be seen as the Taylor series expansion of the function $f_y(x) = (1-x)^{-y/x-1}$ around $x=0$ evaluated at $x=n_0^{-\theta}u$ and $y=\lambda u$. The corresponding Taylor expansion with remainder term writes
\begin{equation}\label{eq:main proof Taylor with remainder}
f_y(x) = e^y \sum_{k=0}^m
         \m({ \sum_{l=0}^k g_{k,l}\,y^l } x^k
       + R_m(x,y)
\quad\text{with}\quad
       R_m(x,y) = \frac1{m!} \int_0^x (x-\xi)^m
                    f_y\0{m+1}(\xi) \dd \xi \,.
\end{equation}
It is not hard to show by induction that there are functions $\varphi_{k,m}$ continuous on the unit disk, such that
\begin{equation}
f_y\0m(x) = f_y(x) \cdot \sum_{k=0}^m \varphi_{k,m}(x) y^k \,,
\end{equation}
It follows that for each $m$, there exists a constant $C_m>0$ such that $\mb|{f_y\0m(x)} \le C_m(1+ |y|^m) \cdot \mb|{f_y(x)}$ for all $|x|\le 1/2$ and $y \in \complex$.  Plugging this into the definition of $R_m(x,y)$ gives that
\begin{equation}
\forall |x|\le 1/2 ,\, \forall y \in \complex,\quad
|R_m(x,y)| \le \frac{C_m}{(m+1)!} (1+|y|^m) \cdot |x|^{m+1}\sup_{\xi \in [0,x]}\mb|{f_y(\xi)}
\end{equation}
Consider the case where $(x,y)=(\nji u,\lambda u)$ with $n_0>0$, $\lambda \in [\lambda\1{min},\lambda\1{max}]$ and $u\in \vj$. In this case the condition $|x|\le 1/2$ is satisfied because $|u|\le \epsilon \nj$ on $\vj$ and $\epsilon=\epsilon(\delta)$ is assumed to be small enough.
Since $|u|\ge 1$ on $\vj$, the term $(1+|y|^m)$ is bounded by a constant times $|u|^m$ uniformly for $u\in \vj$.  Moreover, for all $\xi\in [0,x]$, there exists $\tilde n_0>0$ such that $\xi=\tilde n_0\raisebox{1pt}{$\!\!^{-\theta_j}$} u$. Thus we can apply the bound \eqref{eq:main proof (1-u)^n bound} to see that $|f_y(\xi)| = \mb|{(1-\tilde n_0\raisebox{1pt}{$\!\!^{-\theta_j}$} u)^{-\lambda u-1} } \le c^{-1} e^{\mu \cdot \Re(u)}$ for all $\xi\in [0,x]$. It follows that
\begin{equation}
\forall u \in \vj,\quad
\abs{ R_m(\nji u,\lambda u) }
\le \tilde C_m \frac{|u|^{2m+1} e^{\mu \cdot \Re(u)} }{n_0^{(m+1)\theta_j}}
\end{equation}
for some constant $\tilde C_m$ depending only on $\delta$, $\lambda\1{min}$, $\lambda\1{max}$ and $m$. Then, the Taylor expansion \eqref{eq:main proof Taylor with remainder} becomes
\begin{equation}
(1-\nji u)^{-\lambda \nj-1} = e^{\lambda u}
\sum_{k=0}^m \m({ \sum_{l=0}^k g_{k,l}\,\lambda^l u^{k+l} } \frac1{n_0^{k\cdot \theta_j}} + O\m({ \frac{|u|^{2m+1} e^{\mu\cdot \Re(u)}}{n_0^{(m+1)\theta_j}} }\,,
\end{equation}
where the big-O estimate is uniform \wrt\ $u \in \vj$ as $n_0\to \infty$. Now take $m>N/\theta\1{min}$ and replace each factor in $\prod_{j=1}^d \mb({1-\nji u_j}^{-n_j-1}$ by the \rhs\ of the above formula. After expanding the resulting product, we obtain a \emph{finite} sum. By collecting all the terms of order $O(1/n_0^N)$ together, we obtain an expansion of the form
\begin{equation}
\frac{1}{ \mn({ \bm 1-\nni\bm u }^{\bm n +\bm 1} }
=\ e^{\bm{\lambda \cdot u}} \sum_{\bm{k\cdot \theta}<N} \mH({
     \sum_{\bm l\le \bm k} g_{\bm k,\bm l}\,
        \bm{\lambda^l} \bm u^{\bm k+\bm l}
     } \frac1{n_0^{\bm{k \cdot \theta}} }
   + O\m({ \frac{R(\bm u)}{n_0^N}  } ,
\end{equation}
where the big-O estimate is uniform on $\bm u\in \vv^O$. By following the above calculation more closely, it is not hard to see that we can choose $R(\bm u) = |u_1\cdots u_d|^{\tilde M} e^{\mu \cdot \Re(u_1+\cdots + u_d)}$ for some $\mu,\tilde M>0$.
With the bound \eqref{eq:main proof H(u) bound} for $H(\bm u)$, it follows that $H(\bm u)R(\bm u)$ is integrable on $\vv \supset \vv^O$. So we can integrable the previous display term by term \wrt\ $\bm u\in \vv^O$ to obtain
\begin{equation}
\int_{\vv^O} \frac{H(\bm u)}{
  \mn({ \bm 1-\nni\bm u }^{\bm n +\bm 1} }\dd \bm u
= \sum_{\bm{k\cdot \theta}<N}
   \int_{\vv^O} H(\bm u) e^{\bm{\lambda \cdot u}} \mH({
     \sum_{\bm l\le \bm k} g_{\bm k,\bm l}\,
        \bm{\lambda^l} \bm u^{\bm k+\bm l}
     }\dd \bm u
   \cdot \frac1{n_0^{\bm{k \cdot \theta}} }
   + O\m({ \frac1{n_0^N}  } .
\end{equation}
Like in Step~1, replacing the contour $\vv^O$ by $\vv$ in each term of the above equation only produces an error of order $O(r^{n_0^{\theta\1{min}}})$ as $n_0\to \infty$. The resulting equation divided by $(2\pi i)^d$ gives exactly \eqref{eq:main proof truncated J asymptotics}.

The expansion \eqref{eq:m-transfer power expansion} follows readily from \eqref{eq:main proof defining J} and \eqref{eq:main proof truncated J asymptotics}. This concludes the proof of Theorem~\ref{thm:m-transfer bis}. \qed

\section{The Borel-Laplace transforms}\label{sec:Borel-Laplace}

\newcommand*{\udom}[1][r]{U_{\delta'\!,#1}}
\newcommand*{\vd}[1][']{\bm V_{\delta#1}}

In this section, we prove the three statements in \Cref{thm:Borel-Laplace}.

\paragraph*{Proof of \Crefp{1}{thm:Borel-Laplace}.}

Fix $\delta\in (0,\frac\pi2)$ and $H\in \poly(\odom)$. 
Recall that $\borel[H]$ is defined as the integral 
\begin{equation}
I_{\vd}(\bm \lambda) 
:= \pid \int_{\vv_{\delta'}} 
e^{\bm \lambda \cdot \bm u} H(\bm u) \dd \bm u,
\end{equation}
on a contour $\vd$ chosen from the class of contours
\begin{align*}
\bm{\mathcal V}_{\delta'} := 
\big\{ V_1 \times \cdots \times V_d \,\big|\, & \,
\forall 1\le j\le d,\, V_j \text{ is a piecewise smooth curve in } \Omega_\delta \text{ } \\
&\ \text{which coincide with }\partial\Omega_{\delta'}
   \text{ outside a bounded set, as in \Crefp{a}{fig:Borel-Laplace proof}}
\big\} \,.
\end{align*}
Thanks to the analyticity of $H$ on the domain $\odom=(\Omega_{\delta})^d$, it is clear that for each fixed $\delta'$, the value of the integral $I_{\vd}(\bm \lambda)$ does not depend on the choice of the contour $\vd$ within the class $\bm{\mathcal V}_{\delta'}$.

First, let us fix $\delta' \in (0,\delta)$ and $\vd \in \bm{\mathcal V}_{\delta'}$ and show that the integral $I_{\vd}(\bm \lambda)$ is absolutely convergent and analytic \wrt\ $\bm \lambda \in \kdom[\delta']$.
Let $\udom$ be the closure of $(\complex \setminus \Omega_{\delta'}) \cup B_{0,r}$, where $B_{0,r}\subset \complex$ is the disk of radius $r$ around the origin.
It is a simple exercise to show that for $\delta^\circ \in (0,\delta')$ and $r>0$, we have
\begin{equation}\label{eq:Borel-Laplace proof Borel basic bound}
\forall u \in \udom,\ 
\forall \lambda \in K_{\delta^\circ},\quad
\mb|{ e^{\lambda u} } 
\, \le \,
C_*^{r|\lambda|} e^{-\sigma_* |\lambda|\cdot |u|} \,,
\end{equation}
with $\sigma_* = \sin(\delta'-\delta^\circ)>0$ and $ C_*=e^{1+\sigma_*}$.
Since each component of the contour $\vd \in \bm{\mathcal V}_{\delta'}$ is bounded away from the origin and $H$ is of polynomial type, there exist $\tilde C,M>0$ such that
\begin{equation}\label{eq:Borel-Laplace proof Borel poly bound}
\forall \bm u\in \vv_{\delta'},\quad
|H(\bm u)| \le \tilde C \cdot \m({ |u_1|^M + \cdots + |u_d|^M }  \,.
\end{equation}
On the other hand, there exists $r>0$ such that $V_{\delta'} \subset \udom$. So the bound \eqref{eq:Borel-Laplace proof Borel basic bound} implies that for any bounded set $\bm S\subseteq \kdom[\delta^\circ]$, there exists $C,\sigma>0$, such that
\begin{equation}\label{eq:Borel-Laplace proof Borel exp bound}
\forall \bm u \in \vv_{\delta'} ,\ 
\forall \bm \lambda \in \bm S,\quad
\mb|{ e^{\bm \lambda \cdot \bm u} }
\le C \cdot e^{-\sigma (|u_1|+\cdots + |u_d|)} \,.
\end{equation}
Up to increasing the value of $C$ and decreasing $\sigma$, the polynomial bound on the \rhs\ of \eqref{eq:Borel-Laplace proof Borel poly bound} can be absorbed by the exponential decay in the above display. Therefore we have
\begin{equation}\label{eq:Borel-Laplace proof Borel integrand bound}
\forall \bm u \in \vv_{\delta'} ,\ 
\forall \bm \lambda \in \bm S,\quad
\mb|{ e^{\bm \lambda \cdot \bm u} H(\bm u) }
\le C \cdot e^{-\sigma (|u_1|+\cdots + |u_d|)} 
\end{equation}
for some $C,\sigma>0$.
Since the \rhs\ is independent of $\bm \lambda\in \bm S$ and integrable on $\vv_{\delta'}$, it follows that the integral $I_{\vd}(\bm \lambda)$ is absolutely convergent and analytic \wrt\ $\bm \lambda \in \bm S$. And since this is true for all $\delta^\circ \in (0,\delta')$ and bounded set $\bm S \subset \kdom[\delta^\circ]$, the integral $I_{\vd}$ defines an analytic function on $\kdom[\delta']$. 

Now let us show that $I_{\vd}(\bm \lambda)$ is independent of $\delta'\in(0,\delta)$ as well. For this, we fix $\delta',\delta''\in (0,\delta)$ and $\bm \lambda \in \kdom[\min(\delta',\delta'')]$.
For each $N>0$, let $V^N_{\delta''}$ be the contour obtained by deforming each component of $\vd['']$ inside the disk $B_{0,N}$ to coincide with the corresponding component of $\vd$ there, while keeping it unchanged outside $B_{0,N}$. See \Crefp{a}{fig:Borel-Laplace proof}.
By the triangular inequality, we have
\begin{equation}\label{eq:Borel-Laplace proof symdiff bound}
\abs{ 
\m({\int_{\vv_{\delta' }} - \int_{\vv_{\delta''}} }
e^{\bm \lambda \cdot \bm u} H(\bm u) \dd \bm u
}
=
\abs{ 
\m({\int_{\vv_{\delta' }} - \int_{\vv^N_{\delta''}} } 
e^{\bm \lambda \cdot \bm u} H(\bm u) \dd \bm u
}
\le 
\int_{ \vv_{\delta' } \,\Delta\, \vv^N_{\delta''} }
\abs{ e^{\bm \lambda \cdot \bm u} H(\bm u)} \cdot \abs{\dd \bm u} \,,
\end{equation}
where $\vv_{\delta' }\, \Delta\, \vv^N_{\delta''}$ is the symmetric difference between $\vv_{\delta' }$ and $\vv^N_{\delta''}$.
Let
\begin{equation}
\bm S_n = \setb{\bm u \in \vv_{\delta' } \,\Delta\, \vv^N_{\delta''} }{ n\le \max_j|u_j| < n+1} \,.
\end{equation}
It is not hard to see that $\bm S_n=\varnothing$ for $n<N$, and the $d$-dimensional Lebesgue measure of $\bm S_n$ is bounded by $c n^d$ for some constant $c=c(\delta',\delta'',d)$. 
Moreover, since each point $\bm \lambda \in \kdom[\min(\delta',\delta'')]$ belongs to $\kdom[\delta^\circ]$ for some $\delta^\circ < \min(\delta',\delta'')$, we can deduce from \eqref{eq:Borel-Laplace proof Borel integrand bound} that $\abs{ e^{\bm \lambda \cdot \bm u} H(\bm u)} \le C e^{-\sigma n}$ for all $\bm u\in \bm S_n$ and all $n$. (Here we also use the observation that \eqref{eq:Borel-Laplace proof Borel integrand bound} remains valid when $\vd$ is replaced by $\vd^N$). It follows that
\begin{equation}
\int_{ \vv_{\delta' } \Delta \vv^N_{\delta''} }
\abs{ e^{\bm \lambda \cdot \bm u} H(\bm u)} \cdot \abs{\dd \bm u}
\ \le\ 
\sum_{n=N}^\infty C e^{-\sigma n} \cdot cn^d
\ \cv[]N\ 0 \,.
\end{equation}
Together with \eqref{eq:Borel-Laplace proof symdiff bound}, this implies that $I_{\vd}(\bm \lambda) = I_{\vd['']}(\bm \lambda)$ for all $\bm \lambda \in \kdom[\min(\delta',\delta'')]$.
In this sense, $\borel[H]=I_{\vd}$ is independent of $\delta'\in (0,\delta)$ and thus defines an analytic function on $\kdom$.

\begin{figure}
\centering
\includegraphics[scale=1,page=2]{ddom.pdf}
\caption{(a) Some domains and contours used in the proof of \Crefp{1}{thm:Borel-Laplace}.
(b) Some domains and contours used in the proof of \Crefp{2}{thm:Borel-Laplace}.}
\label{fig:Borel-Laplace proof}
\end{figure}

To show that $\borel[H] \in \poly(\kdom[\delta^\circ])$ for all $\delta^\circ\in (0,\delta)$, let us fix $\bm \lambda \in \kdom[\delta^\circ]$ and $\delta'\in (\delta^\circ,\delta)$, and consider the particular contour $\vv_{\delta'} = V_1 \times \cdots V_d$ with $V_j = \partial \udom[1/|\lambda_j|]$, where $\udom$ is defined above \eqref{eq:Borel-Laplace proof Borel basic bound}.
By the bound \eqref{eq:Borel-Laplace proof Borel basic bound}, we have 
\begin{equation}
\forall \bm u \in \vv_{\delta'},\quad
\mb|{e^{\bm \lambda \cdot \bm u}} 
\le  C_*^d \cdot e^{-\sigma_* (|\lambda_1 u_1|+ \cdots + |\lambda_d u_d|)}
\end{equation}
Since the contour $V_j$ is not bounded away from the origin when $|\lambda_j|\to \infty$, we need to use the complete bound \eqref{eq:def polynomial type 2-ended} for the function of polynomial type $H\in \poly(\odom)$ here, which implies 
\begin{equation}
\forall \bm u \in \vv_{\delta'},\quad
\mb|{ e^{\bm \lambda \cdot \bm u} H(\bm u) }
\le  C \cdot \mB({ |u_1|^M + |u_1|^{-M} + \cdots + |u_d|^M + |u_d|^{-M} } \cdot e^{-\sigma_* \cdot (|\lambda_1 u_1|+ \cdots + |\lambda_d u_d|)} 
\end{equation}
for some $C,M>0$. We integrate the above bound on $\vv_{\delta'}$ and make the change of variables $v_j= \lambda_j u_j$. Notice that the contour $\bm \lambda \vd \equiv (\lambda_1 V_1) \times \cdots \times (\lambda_d V_d)$ for the variable $\bm v$ after this change no longer depends on $\bm \lambda$. This gives us
\begin{align*}
\mb|{ \borel[H](\bm \lambda) } 
&= \abs{\pid \int_{\vv_{\delta'}} 
        e^{\bm \lambda \cdot \bm u} H(\bm u) \dd \bm u 
   } \\
&\le 
\frac C{(2\pi)^d} \int_{\bm \lambda \vv_{\delta'}}
\m({ \abs{\frac{v_1}{\lambda_1}}^M \!\!
   + \abs{\frac{v_1}{\lambda_1}}^{-M} \!\! + \cdots
   + \abs{\frac{v_d}{\lambda_d}}^M \!\!
   + \abs{\frac{v_d}{\lambda_d}}^{-M}
    } 
\frac{  e^{-\sigma_* (|v_1| + \cdots + |v_d|)}
|\dd \bm v|}{|\lambda_1 \cdots \lambda_d|} \\
&\le \frac{\max \m({ |\lambda_1|^M, |\lambda_1|^{-M},
           \ldots, |\lambda_d|^M, |\lambda_d|^{-M}
     }}{|\lambda_1 \cdots \lambda_d|}
     \cdot J
\end{align*}
where the integral
\begin{equation}
J = 
\frac C{(2\pi)^d} \int_{\bm \lambda \vv_{\delta'}}
\m({  |v_1|^M + |v_1|^{-M} + \cdots 
    + |v_d|^M + |v_d|^{-M} } 
\cdot e^{-\sigma_* (|v_1| + \cdots + |v_d|)}
\abs{ \dd \bm v }
\end{equation}
is finite and independent of $\bm \lambda$. 
It follows that
\begin{equation}
\forall \bm \lambda \in \kdom[\delta^\circ],\quad
\mb|{ \borel[H](\bm \lambda) }
\le J\cdot \frac{|\lambda_1|^M + |\lambda_1|^{-M}
      + \cdots + |\lambda_d|^M + |\lambda_d|^{-M}
    }{|\lambda_1 \cdots \lambda_d|} \,.
\end{equation}
We will see in \Cref{lem:poly-type closure} that the monomial function $\bm \lambda \mapsto \bm{\lambda^\alpha}$ is in $\poly(\kdom[\delta^\circ])$, for all $\bm \alpha \in \real^d$ and $\delta^\circ\in (0,\pi)$.
This allows us to bound the \rhs\ of the above display by a polynomial-type bound of the form \eqref{eq:def polynomial type 2-ended}.
Hence we have $\borel[H]\in \poly(\kdom[\delta^\circ])$, for all $\delta^\circ\in (0,\delta)$.

\paragraph*{Proof of \Crefp{2}{thm:Borel-Laplace}.}
Fix $\bm c \in \real_{>0}^d$, $\delta \in (0,\frac\pi2)$ and $I\in \poly(\kdom)$. 
For each $\bm \varphi \in (-\delta,\delta)^d$, we define $\bm{R_\varphi} = R_{c_1,\varphi_1} \times \cdots \times R_{c_d,\varphi_d} \subset \kdom$, where $R_{c,\varphi} \subset \complex$ is the contour which starts at $c\in \real_{>0}$, then follows an arc on the circle $\partial B_{0,|c|}$, and then goes to $\infty$ along the ray $\set{\lambda\in \complex_*}{\arg(\lambda)=\varphi}$, as in \Crefp{b}{fig:Borel-Laplace proof}.
Let
\begin{equation}
H_{\bm \varphi}(\bm u) = \int_{\bm{R_\varphi}} e^{-\bm \lambda \cdot \bm u} I(\bm \lambda) \dd \bm \lambda 
\,.
\end{equation}
Recall that $\odom[\psi] = (\Omega_\psi)^d$ with $\Omega_\psi \equiv K_{\frac\pi2+\psi} := \Set{u\in \complex_*}{ |\arg(u)|<\frac\pi2+\psi }$, for any $\psi\in (-\frac\pi2,\frac\pi2)$.

First, let us show that the integral $H_{\bm \varphi}(\bm u)$ is absolutely convergent and analytic \wrt\ $\bm u \in e^{-i\bm \varphi} \odom[0] := (e^{-i\varphi_1} \Omega_0) \times \cdots \times (e^{-i\varphi_d} \Omega_0)$. The proof is similar to the one for the integral $I_{\vd}(\bm \lambda)$: 
It~is a simple exercise to show that for any $\varphi\in (-\delta,\delta)$, $\psi\in (0,\frac\pi2)$ and $c\in \{c_1,\ldots,c_d\}$ we have
\begin{equation}\label{eq:Borel-Laplace proof Laplace basic bound}
\forall \lambda \in R_{c,\varphi} ,\
\forall u \in e^{-i\varphi} \Omega_{-\psi},\quad
\mb|{ e^{-\lambda u} } \le
C_*^{|u|} e^{-\sigma_* |\lambda| \cdot |u|}
\end{equation}
with $\sigma_*=\sin(\psi)>0$ and $C_*=e^{(1+\sigma_*)\max_j |c_j|}$. Following the same steps as for the bounds \eqref{eq:Borel-Laplace proof Borel poly bound}--\eqref{eq:Borel-Laplace proof Borel integrand bound}, we deduce that for any bounded subset $\bm S\subset e^{-i\bm \varphi}\odom[-\psi]$, where $\psi\in (0,\frac\pi2)$ and $\bm \varphi \in (-\delta,\delta)^d$, there exists $C,\sigma>0$ such that
\begin{equation}\label{eq:Borel-Laplace proof Laplace integrand bound}
\forall \bm \lambda\in \bm{R_\varphi} ,\
\forall \bm u \in \bm S,\quad
\mb|{ e^{\bm \lambda \cdot \bm u} I(\bm \lambda) }
\le C\cdot e^{-\sigma( |\lambda_1|+\cdots+|\lambda_d| )} \,.
\end{equation}
Since the \rhs\ is independent of $\bm u\in \bm S$ and integrable on $\bm{R_\varphi}$, it follows that the integral $H_{\bm \varphi}(\bm u)$ is absolutely convergent and analytic \wrt\ $\bm u \in \bm S$. And since this is true for all bounded $\bm S\subset e^{-i\bm \varphi} \odom[-\psi]$ with $\psi\in (0,\frac\pi2)$, we conclude that $H_{\bm \varphi}$ defines an analytic function on $e^{-i\bm \varphi} \odom[0]$.

Now let us show that $H_{\bm \varphi}(\bm u) = H_{\bm 0}(\bm u)$ for all $\bm \varphi \in (-\delta,\delta)^d$ and $\bm u \in \odom[-\delta]$. 
Fix some $\bm \varphi \in (-\delta,\delta)^d$. For any $N>0$, let $\bm{R_\varphi}^N = R_{c_1,\varphi_1}^N \times \cdots \times R_{c_d,\varphi_d}^N$, where $R_{c,\varphi}^N$ is the curve which coincides with the interval $[c,N]$ inside the disk $B_{0,N}$, while staying on the ray $\set{\lambda\in \complex_*}{\arg(\lambda)=\varphi}$ outside $B_{0,N}$. See \Crefp{b}{fig:Borel-Laplace proof}. 
By construction, the contour $\bm{R_\varphi}^N$ coincides with $\bm{R_0}$ in the polydisk $(B_{0,N})^d$. Like in the previous proof for $I_{\vd}(\bm \lambda)$, the sets
\begin{equation}
\tilde{\bm S}_n = 
\setb{\bm \lambda \in \bm{R_\varphi}^N \,\Delta\, \bm{R_0}
}{ n\le \max_j |\lambda_j|<n+1 }
\end{equation}
satisfy $\tilde{\bm S}_n = \varnothing$ for all $n<N$ and $\int_{\tilde{\bm S}_n}|\dd \bm \lambda| \le cn^d$ for some $c<\infty$ independent of $n$.
Moreover, since each point $\bm u\in \odom[-\delta]$ belongs to $\odom[-\psi] \cap e^{-i\bm \varphi}\odom[-\psi]$ for some $\psi\in (0,\frac\pi2)$, we can deduce from \eqref{eq:Borel-Laplace proof Laplace integrand bound} that $\mb|{ e^{-\bm \lambda \cdot \bm u} I(\bm \lambda) } \le C e^{-\sigma n}$ for all $\bm \lambda \in \tilde{\bm S}_n$ and all $n$. (Here we also use the fact that \eqref{eq:Borel-Laplace proof Laplace integrand bound} remains valid when $\bm{R_\varphi}$ is replaced by $\bm{R_\varphi}^N$).
It follows that
\begin{align*}
\mB|{ H_{\bm \varphi}(\bm u) - H_{\bm 0}(\bm u) }
&= \abs{ \m({ \int_{\bm{R_\varphi}^N} 
            - \int_{\bm{R_0}} }
            e^{-\bm \lambda\cdot \bm u} I(\bm \lambda)
            \dd \bm \lambda }
\le\int_{ \bm{R_\varphi}^N \Delta \bm{R_0} }
    \abs{ e^{-\bm \lambda\cdot \bm u} I(\bm \lambda) }
    \abs{\dd \bm \lambda} \\
&\le \sum_{n=N}^\infty Ce^{-\sigma n} \cdot cn^d
\ \cv[]N \ 0
\end{align*}
Therefore, $H_{\bm \varphi}(\bm u) = H_{\bm 0}(\bm u)$ for all $\bm \varphi \in (-\delta,\delta)^d$ and $\bm u \in \odom[-\delta]$. And since each $H_{\bm \varphi}$ is analytic on $e^{-i\bm \varphi} \odom[0]$, the function $\laplace{}[I]\equiv H_{\bm 0}$ has an analytic continuation on the union $\odom = \bigcup_{\bm \varphi \in (-\delta,\delta)^d} e^{-i\bm \varphi} \odom[0]$.

\newcommand{\bhat}{\tilde{\bm B}}

Finally, let us fix $\delta^\circ \in (0,\delta)$ and $\bm \lambda \in \kdom[\delta^\circ]$, and show that $\borel \circ \laplace{}[I]$ is well-defined and analytic at $\bm \lambda$ when the $|\lambda_j|$'s are large enough. 
For this, let us first prove that for all $\delta''\in (0,\delta)$, there exists $m>0$ such that
\begin{equation}\label{eq:Borel-Laplace proof Laplace value exp bound}
\forall \bm u \in \odom[\delta''] \cap \bhat,\quad
\abs{ \laplace{}[I](\bm u) } 
\le e^{m\cdot (|u_1| + \cdots + |u_d|)} \,.
\end{equation}
where $\bhat := \setn{\bm u\in \complex^d}{\forall j,\,|u_j|\ge 1}$.
Indeed, for all $\bm \varphi \in (-\delta,\delta)^d$ and $\psi\in (0,\frac\pi2)$, \eqref{eq:Borel-Laplace proof Laplace basic bound} implies that
\begin{equation}\label{eq:Borel-Laplace proof Laplace exp bound}
\forall \bm u \in e^{-i\bm \varphi} \odom[-\psi],\ 
\forall \bm \lambda \in \bm{R_\varphi} ,\quad
\mb|{ e^{-\bm \lambda \cdot \bm u} }
 \le C_*^{|u_1|+ \cdots + |u_d|}
\cdot e^{-\sigma_* \m({ |\lambda_1 u_1|+ \cdots + |\lambda_d u_d| }} \,.
\end{equation}
In addition, since $I\in \poly(\kdom)$ and $\bm{R_\varphi} \subset \kdom$ is bounded away from $\bm 0$, there exist $\tilde C,M>0$ such that
\begin{equation}\label{eq:Borel-Laplace proof Laplace poly bound}
\forall \bm \lambda \in \bm{R_\varphi} ,\quad
\mb|{ I(\bm \lambda) } 
 \le \tilde C \m({ |\lambda_1|^M + \cdots + |\lambda_d|^M } \,.
\end{equation}
For $\bm u\in \bhat$, we can increase the value of $C_*$ and decrease that of $\sigma_*$ on the \rhs\ of \eqref{eq:Borel-Laplace proof Laplace exp bound} to absorbe the polynomial function on \rhs\ of \eqref{eq:Borel-Laplace proof Laplace poly bound}. It follows that there exist constants~$C,\sigma>0$, which do not depend on $\bm \varphi$, such that
\begin{equation}
\forall \bm u \in e^{-i\bm \varphi} \odom[-\psi]\! \cap \! \bhat ,\ 
\forall \bm \lambda \in \!\bm{R_\varphi} ,\quad
\mb|{ e^{-\bm \lambda \cdot \bm u}I(\bm \lambda) }
\le C^{|u_1|+ \cdots + |u_d|}
\cdot e^{-\sigma \m({ |\lambda_1 u_1|+ \cdots + |\lambda_d u_d| }}.
\end{equation}
One can easily check by direct computation that
\begin{equation}
J := \sup_{\bm \varphi \in (-\delta,\delta)^d} \  \sup_{\bm u\in \bhat} \
\int_{\bm{R_\varphi}} \!
e^{-\sigma \m({ |\lambda_1 u_1|+ \cdots + |\lambda_d u_d| }} |\dd \bm \lambda|
\end{equation}
is finite. It follows that $|\laplace{}[I](\bm u)| \le J\cdot C^{|u_1| + \cdots + |u_d|}$ for all $\bm u\in e^{-i\bm \varphi} \odom[-\psi] \cap \bhat$. This implies \eqref{eq:Borel-Laplace proof Laplace value exp bound},
because $J,C$ are independent of $\bm \varphi$, and we have $\odom[\delta''] = \bigcup_{\bm \varphi \in (-\delta,\delta)^d} e^{-i\bm \varphi} \odom[-\psi]$ with the choice $\psi:=\delta-\delta''$. 
Now assume $\delta^\circ < \delta'<\delta''$ and consider the contour $\vd = (\partial \udom[1])^d$, where $\udom$ is defined above \eqref{eq:Borel-Laplace proof Borel basic bound} (see also \Crefp{a}{fig:Borel-Laplace proof}). Then \eqref{eq:Borel-Laplace proof Borel basic bound} implies that
\begin{equation}
\forall \bm \lambda \in \kdom[\delta^\circ] ,\ 
\forall \bm u \in \vd ,\quad
\mb|{ e^{\bm \lambda \cdot \bm u} }
\le C_*^{|\lambda_1|+\cdots+|\lambda_d|} 
e^{-\sigma_* \m({ |\lambda_1 u_1| + \cdots + |\lambda_d u_d| }} 
\end{equation}
Since we have $\vd \subset \odom[\delta''] \cap \bhat$, the last display and \eqref{eq:Borel-Laplace proof Laplace value exp bound} imply that $\borel[\laplace{}[I]]$ is well-defined and analytic on $\Setb{\bm \lambda \in \kdom[\delta^\circ]}{\forall j,\, |\lambda_j|>\frac{m+1}{\sigma_*}}$.

\paragraph*{Proof of \Crefp{3}{thm:Borel-Laplace}.}

Notice that the Borel transform $\borel$ and the Laplace transform $\laplace$ can both be written as the product of $d$ univariate integral transforms: we have $\borel = \borel\01 \circ \cdots \circ \borel\0d$ and $\laplace = \laplace[c_1]\01 \circ \cdots \circ \laplace[c_d]\01$ with
\begin{equation}
\borel\0j[H](\bm u_{\hat j}) := \frac{1}{2\pi i} \int_{V_{\delta'}} e^{\lambda u_j} H(\bm u) \dd u_j
\qtq{and}
\laplace[c]\0j[I](\bm \lambda_{\hat j}) := \int_c^\infty e^{\lambda_j u} I(\bm \lambda) \dd \lambda_j,
\end{equation}
where $\bm u_{\hat j} = (u_1,\ldots,u_{j-1},\lambda,u_{j+1},\ldots u_d)$ and $\bm \lambda_{\hat j} = (\lambda_1,\ldots,\lambda_{j-1},u,\lambda_{j+1},\ldots \lambda_d)$. Moreover, for all~$j\ne k$, $\borel\0j$ commute with $\laplace[c_k]\0k$, because they operate on different variables. It follows that 
\begin{equation}\label{eq:Borel-Laplace proof univariate decomposition}
\borel \circ \laplace{} = \m({\borel\01 \circ \laplace[c_1]\01} \circ \cdots \circ \m({\borel\0d \circ \laplace[c_d]\0d}
\qtq{and}
\laplace{} \circ \borel = \m({\laplace[c_1]\01 \circ \borel\01} \circ \cdots \circ \m({\laplace[c_d]\0d \circ \borel\0d} .
\end{equation}
The above decompositions allow us to reduce the proof of \Crefp{3}{thm:Borel-Laplace} to the univariate case. In the following, we assume $d=1$ and fix some $\delta\in (0,\frac\pi2)$ and $c\in \real_{>0}$. 

Let $I\in \poly(K_\delta)$. Let us show that $\borel \circ \laplace[c][I](\lambda) = I(\lambda)$ for all $\lambda>c$.
For this, fix some $\delta'\in (0,\delta)$ and let  $V_{\delta'} = \partial \odom$ with the domain $\odom$ defined above \eqref{eq:Borel-Laplace proof Borel basic bound}. 
Let $V_{\delta'}^+$ (resp.\ $V_{\delta'}^-$) be the part of $V_{\delta'}$ in the upper half-plane (resp.\ lower half-plane). 
Recall from the proof of \Crefp{2}{thm:Borel-Laplace} that $\laplace[c][I]$ can be expressed as an integral over $R_{c,\varphi}$ for any $\varphi \in (-\delta,\delta)$, where $R_{c,\varphi}$ is the contour shown in \Crefp{3}{fig:Borel-Laplace proof}. Fix some $\varphi \in (\delta',\delta)$. Then we can write
\begin{align*}
\borel \circ \laplace[c][I](\lambda) 
&= \frac{1}{2\pi i} \int_{V_{\delta'}} 
   e^{\lambda u} \laplace[c][I](u) \dd u \\
&= \frac{1}{2\pi i} \m({ 
   \int_{V_{\delta'}^+} e^{\lambda u} 
   \m({ \int_{R_{c,-\varphi}} e^{-\tau u} I(\tau) \dd \tau }
   \dd u
 + \int_{V_{\delta'}^-} e^{\lambda u} 
   \m({ \int_{R_{c, \varphi}} e^{-\tau u} I(\tau) \dd \tau }
   \dd u
}.
\end{align*}
Thanks to the bounds \eqref{eq:Borel-Laplace proof Borel basic bound} and \eqref{eq:Borel-Laplace proof Laplace basic bound}, it is not hard to see that $(u,\tau)\mapsto e^{\lambda u} e^{-\tau u} I(\tau)$ is integrable on both $V_{\delta'}^+ \times R_{c,-\varphi}$ and $V_{\delta'}^- \times R_{c,\varphi}$. Thus we have by Fubini's theorem
\begin{align*}
\borel \circ \laplace[c][I](\lambda) 
&= \frac1{2\pi i} \m({ 
     \int_{R_{c,-\varphi}} I(\tau) 
     \m({ \int_{V_{\delta'}^+} e^{\lambda u-\tau u} \dd u }
     \dd \tau
   + \int_{R_{c, \varphi}} I(\tau) 
     \m({ \int_{V_{\delta'}^-} e^{\lambda u-\tau u} \dd u }
     \dd \tau
} \\
&= \frac1{2\pi i} \m({ 
     \int_{R_{c,-\varphi}} I(\tau) \cdot
     \m[{ \frac{ e^{(\lambda-\tau) u} }{\lambda-\tau} 
        }_1^{e^{i(\frac\pi2+\delta')}\infty}
     \!\!\! \dd \tau
\ +\ \int_{R_{c, \varphi}} I(\tau) \cdot
     \m[{ \frac{ e^{(\lambda-\tau) u} }{\lambda-\tau} 
        }^1_{e^{-i(\frac\pi2+\delta')}\infty}
     \!\!\! \dd \tau
} \\
&= \frac1{2\pi i} \m({ 
     \int_{R_{c,-\varphi}} I(\tau) \cdot
     \frac{ e^{\lambda-\tau} }{\tau-\lambda} 
     \dd \tau
   + \int_{R_{c, \varphi}} I(\tau) \cdot
     \frac{ e^{\lambda-\tau} }{\lambda-\tau} 
     \dd \tau
} \\
&= \frac1{2\pi i} 
     \int_{\mathcal R} 
     I(\tau) \cdot
     \frac{ e^{\lambda-\tau} }{\tau-\lambda} 
     \dd \tau
\end{align*}
where $\mathcal R = R_{c,-\varphi} \cup \tilde R_{c,\varphi}$ and $\tilde R_{c,\varphi}$ is the contour $R_{c,\varphi}$ oriented in the opposite direction. Notice that $\mathcal R \subset K_\delta$ is a bi-infinite contour whose two ends extend to $\infty$. Moreover, the integrand $\tau \mapsto I(\tau) \frac{ e^{\lambda-\tau} }{\tau-\lambda}$ is meromorphic on $K_\delta$, has a unique simple pole at $\tau=\lambda$ (which is on the right of the contour $\mathcal R$), and decays exponentially when $\Re(\tau)\to \infty$. 
It follows from the residue theorem that
\begin{equation}
\frac1{2\pi i} 
     \int_{\mathcal R} 
     I(\tau) \cdot
     \frac{ e^{\lambda-\tau} }{\tau-\lambda} 
     \dd \tau
= \res_{\tau \to \lambda} \m({ I(\tau)
     \frac{ e^{\lambda-\tau} }{\tau-\lambda} }
= I(\lambda) \,.
\end{equation}
This proves $\borel \circ \laplace[c][I](\lambda) =I(\lambda)$ for all $\lambda>c$, and thus for all $\lambda \in K_\delta$ by analytic continuation. In other words, $\borel \circ \laplace[c]$ is the identity on $\poly(K_{\delta})$. By the decomposition \eqref{eq:Borel-Laplace proof univariate decomposition}, the same is true in any dimension $d$.

Now fix some $H\in \poly(\Omega_\delta)$ and $u>0$. Consider the  contour $V_{\delta'} = \partial \udom$ for some $\delta'\in (0,\delta)$ and $r\in (0,u)$. It is not hard to check that the function $(\lambda,v)\mapsto e^{-\lambda u} e^{\lambda v} H(v)$ is integrable on $[c,\infty)\times V_{\delta'}$. By Fubini's theorem, we have
\begin{align}
\laplace[c]\circ \borel[H] (u)
&= \frac{1}{2\pi i} \int_c^\infty e^{-\lambda u} 
   \m({ \int_{V_{\delta'}} e^{\lambda v} H(v) \dd v)
      } \dd \lambda \\
&= \frac{1}{2\pi i} \int_{V_{\delta'}} H(v) 
   \m({ \int_c^\infty e^{-(u-v)\lambda} \dd \lambda 
      } \dd v 
= \frac{1}{2\pi i} \int_{V_{\delta'}} H(v) 
   \frac{ e^{-(u-v)c} }{u-v} \dd v \,.
\end{align}
For each $u\in \complex$, let
\begin{equation}
E_c(u) = - \frac{1}{2\pi i} \lim_{R\to \infty} \int_{\partial \udom[R]} H(v) \frac{e^{-(u-v)c}}{u-v} \dd v \,.
\end{equation}
It is not hard to see that the above limit stablizes when $R>|u|$, and defines an entire function of $u$. Actually, by the residue theorem, we have for all $R>|u|$,
\begin{align*}
\frac{1}{2\pi i} 
\int_{\partial \udom[R]} H(v) \frac{e^{-(u-v)c}}{u-v} \dd v 
&= \frac{1}{2\pi i} 
\int_{V_{\delta'}} H(v) \frac{ e^{-(u-v)c} }{u-v} \dd v
+ \res_{v\to u} H(v) \frac{ e^{-(u-v)c} }{u-v}  \\
&= \laplace[c]\circ \borel[H] (u) - H(u) \,.
\end{align*}
It follows that $\laplace[c]\circ \borel[H] (u) = H(u) + E_c(u)$ for all $u>0$. By analytic continuation, the same is true for all $u\in \Omega_\delta$.
To recover the case of general dimension $d$, we apply this formula of $\laplace[c]\circ \borel[H]$ to each factor in the decomposition \eqref{eq:Borel-Laplace proof univariate decomposition}. To simplify notation, we write $\mathcal I_j =
\laplace[c_j]\0j \circ \borel\0j$, then
\begin{align*}
\laplace \circ \borel[H]
&=\m({\mathcal I_1 \circ \cdots \circ \mathcal I_{d-1} 
\circ \mathcal I_d }[H] \\
&=\m({\mathcal I_1 \circ \cdots \circ \mathcal I_{d-1} }[H]
    + \m({\mathcal I_1 \circ \cdots \circ \mathcal I_{d-1} }
      [E_{\bm c}\0d] \\
&= \qquad \cdots \qquad \qquad \cdots \qquad \qquad \cdots \qquad \\
&= H + E_{\bm c}\01 + \mathcal I_1[E_{\bm c}\02] + \cdots 
     + \m({\mathcal I_1 \circ \cdots \circ \mathcal I_{d-1}}
      [E_{\bm c}\0d] \,.
\end{align*}
where each $E_{\bm c}\0j$ is an analytic function on $\odom$ which is entire \wrt\ the variable $u_j$. The same is true for 
$\mathcal I_1\circ \cdots \mathcal I_{j-1}[E_{\bm c}\0j]$, because the integral transform $\mathcal I_1\circ \cdots \mathcal I_{j-1}$ does not affect the variable $u_j$.
It follows that $E_{\bm c}\01 + \mathcal I_1[E_{\bm c}\02] + \cdots + \m({\mathcal I_1 \circ \cdots \circ \mathcal I_{d-1}}
 [E_{\bm c}\0d]$ is a demi-entire function on $\odom$. This proves the decomposition $\laplace \circ \borel[H] = H+E_{\bm c}$ in any dimension $d$, and concludes the proof of \Cref{thm:Borel-Laplace}.

\section{Discussions}\label{sec:discussions}

In this section, we discuss some additional properties of the classes of multivariate functions used in the statement of the multivariate transfer theorems.

\paragraph{$\Delta$-analytic functions.}
Like in the univariate case, the multivariate transfer theorems relies fundamentally on the $\Delta$-analyticity of the generating function. But unlike the univariate case, $\Delta$-analyticity is a rather restrictive condition in the multivariate setting. In particular, it implies that the dominant singularity (for any reasonable definition of the term) is unique and independent of the exponent $\bm \theta$ and the direction $\bm \lambda$ of the \diag\ limit taken.
This is in stark contrast with the case of rational functions, where the dominant singularities (a.k.a.\ contributing critical points) generically depend on the direction of the diagonal limit taken.
In fact, a multivariate rational function is never $\Delta$-analytic at any of its poles, unless its denominator has a univariate linear factor which vanishes at this pole.

\note{Recall that a function $A$ is called \emph{meromorphic} at $\bm z^*\in \complex^d$ if there exist a neighborhood $U$ of $\bm z^*$ and analytic functions $F,G:U\to \complex$ such that $G\not \equiv 0$ and $A(\bm z)=\frac{F(\bm z)}{G(\bm z)}$ for all $\bm z \in U$ such that $G(\bm z)\ne 0$. And a function is called meromorphic on an open set $\bm \Omega \subseteq \complex^d$ if it is meromorphic at every point of $\bm \Omega$.
The set of meromorphic functions on $\bm \Omega$ form an integral domain.}

\begin{proposition}[Genuinely multivariate rational functions are never $\Delta$-analytic]
\label{prop:rational D-analytic}
If a rational function is $\Delta$-analytic at $\bm 1$ and has a pole at $\bm 1$, then its denominator is divisible by $z_j-1$ for some $1\le j\le d$.
\end{proposition}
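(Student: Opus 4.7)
The plan is to prove the contrapositive: assume $G \in \complex[z_1,\dots,z_d]$ has $G(\bm 1) = 0$ but no $z_j - 1$ divides $G$; we will produce a zero of $G$ inside $\ddom$, contradicting $\Delta$-analyticity of $F/G$ (which forces $G \ne 0$ on $\ddom$). By passing to an irreducible factor of $G$ that vanishes at $\bm 1$, and using that each $z_j - 1$ is itself irreducible, we may further assume $G$ is irreducible with $G(\bm 1) = 0$ and $G$ distinct from every $z_j - 1$. Setting $\tilde G(\bm w) = G(\bm 1 - \bm w)$ and noting that, in a neighbourhood of $\bm 1$, $\bm z \in \ddom$ corresponds to $\bm w \in \odom = \Omega_\delta^d$, the task reduces to producing a zero of $\tilde G$ inside $\odom$ arbitrarily close to $\bm 0$, where $\tilde G$ is an irreducible polynomial with $\tilde G(\bm 0) = 0$ and no $w_j$ dividing $\tilde G$.

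Next, I would study the irreducible algebraic hypersurface $V = \{\tilde G = 0\} \subset \complex^d$, which has dimension $d-1$. The hypothesis that $\tilde G$ is not a multiple of any $w_j$ means $V$ is distinct from every coordinate hyperplane $\{w_j = 0\}$; hence $V \cap \{w_j = 0\}$ is a proper algebraic subset of $V$ of dimension at most $d-2$. In particular, no local analytic branch $B$ of $V$ at $\bm 0$ — being a $(d-1)$-dimensional analytic germ — can be contained in any $\{w_j = 0\}$. Fix one such branch $B$ and a normalization $\nu:(\complex^{d-1},\bm 0) \to (B,\bm 0)$, $\bm t \mapsto (w_1(\bm t),\dots,w_d(\bm t))$; each $w_j(\bm t)$ is then a nonzero convergent power series in $\bm t$.

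To locate $\bm t$ near $\bm 0$ with $\nu(\bm t) \in \odom$, I would restrict to a one-complex-parameter family $\bm t = s\bm\tau$ and analyze it via Puiseux-type expansions $w_j(s\bm\tau) = c_j(\bm\tau)\, s^{\mu_j}(1+o(1))$ as $s\to 0$, where $\bm\tau \in (\complex_*)^{d-1}$ is chosen so that $c_j(\bm\tau) \ne 0$ for every $j$ (generic $\bm\tau$ works since each $w_j$ is nontrivial on $B$). The condition $w_j(s\bm\tau)\in\Omega_\delta$ becomes the angular constraint $\arg c_j(\bm\tau) + \mu_j\arg s \in (-\pi/2-\delta,\pi/2+\delta) \pmod{2\pi}$. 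Since $\Omega_\delta$ has angular aperture $\pi+2\delta>\pi$, each such constraint individually forbids only an arc of measure less than $\pi$ on the torus of phases. Using the $d$ real angular parameters in $(\arg s,\arg\tau_1,\dots,\arg\tau_{d-1})$, together with the additional freedom coming from varying the magnitudes of the $\tau_i$'s (which generically moves $\arg c_j(\bm\tau)$ in a non-trivial way when $c_j$ is not a pure monomial), an arc-intersection / measure argument yields a simultaneous solution of the $d$ constraints, hence a zero of $\tilde G$ in $\odom$ arbitrarily close to $\bm 0$.

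The main obstacle will be the degenerate case in which the leading homogeneous part $P_p$ of $\tilde G$ at $\bm 0$ is a monomial $\bm w^{\bm\alpha}$: the tangent cone $\{P_p=0\}$ is then contained in the union of coordinate hyperplanes and entirely misses $\odom$, so the branch $B$ and its Puiseux exponents $\mu_j$ are not visible at the level of tangent analysis and the $c_j$'s above can all be pure monomials. Handling this cleanly requires a finer study of the Newton polyhedron of $\tilde G$ and a weighted blow-up (equivalently, iterated Weierstrass preparation) that desingularizes $\tilde G$ enough so that, in the blown-up coordinates, each local branch of $V$ admits a non-degenerate tangent direction within the transform of $\odom$; the angular-constraint argument above then applies on each such branch to conclude.
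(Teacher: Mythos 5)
Your reduction is the same as the paper's: pass to the denominator, translate to the origin, and reduce to the local statement that a polynomial $\tilde G$ with $\tilde G(\bm 0)=0$ and no zero on a truncated cone $(\Omega_{\delta,\epsilon})^d$ must be divisible by some $w_j$ (this is exactly \Cref{lem:rational D-analytic}). But your proof of that local statement is not complete, and the gap is the one you name yourself in the last paragraph. First, the one\nobreakdash-parameter arc argument only closes for $d=2$: each constraint forbids a set of $\arg s$ of measure $\pi-2\delta$, and for $d\ge 3$ the union of the forbidden sets can have measure exceeding $2\pi$, so you must invoke the extra freedom in $\arg\tau_i$ and in the magnitudes $|\tau_i|$ --- and that extra freedom is precisely what evaporates when the leading coefficients $c_j(\bm\tau)$ are pure monomials in $\bm\tau$, i.e.\ in the degenerate case where the relevant part of the Newton polyhedron is a monomial. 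You acknowledge this and defer it to a Newton-polyhedron/weighted-blow-up analysis that you do not carry out. Since that degenerate situation is entirely generic for the polynomials at issue (already $w_1w_2-w_1^3-w_2^3$ has monomial tangent cone), the argument as written does not prove the proposition.

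It is worth seeing how the paper gets around both difficulties with much lighter machinery. In the bivariate case it takes a Newton--Puiseux branch $v=\varphi(u)\sim c\,u^{\alpha}$ with $\alpha\ge 1$ (after possibly swapping $u$ and $v$); the map $u\mapsto c\,u^{\alpha}$ multiplies angles at $0$ by $\alpha$, so the image of the sector of aperture $\pi+2\delta$ contains a sector of aperture $\alpha(\pi+2\delta)$, and since $\alpha(\pi+2\delta)+(\pi+2\delta)>2\pi$ the image must meet $\Omega_{\delta,\epsilon}$ no matter how it is oriented. In particular, branches tangent to a coordinate axis --- your ``degenerate'' monomial case --- cause no trouble at all: the exponent $\alpha$ only widens the sector, and the aperture being strictly larger than $\pi$ does the rest. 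For $d\ge 3$ the paper does not attempt a geometric branch analysis (which, as your measure count shows, is delicate): it reduces to the bivariate case algebraically, substituting $(u,vx_1,\ldots,vx_{d-1})$ with $\bm x\in\real_{>0}^{d-1}$, applying the $d=2$ result to each such slice, and running an induction on the coefficients of $H$ (using analytic continuation in $\bm x$ and the fact that the polynomial ring is an integral domain) to conclude that all coefficients of $H$ vanish, a contradiction. To make your approach rigorous you would either have to supply the resolution-of-singularities step you sketch, or replace the higher-dimensional geometric argument by some such algebraic reduction to the two-variable case.
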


\begin{remark*}
The proposition as well as its proof given below can be easily generalized to meromorphic functions. We shall not enter into the details here and refer to \cite[Section~3.1.1]{Melczer2021} for an introduction to meromorphic functions in several variable.
\end{remark*}

\begin{proof}
Consider a rational function $A=\frac{F}{G}$, where $F,G$ are coprime polynomials. The proposition follows directly from \Cref{lem:rational D-analytic} below about the local geometry of the zero set of a polynomial: it suffices to apply \Cref{lem:rational D-analytic} to the polynomial $H(\bm u)=G(\bm 1-\bm u)$.
\end{proof}

For $\delta,\epsilon>0$, let $\odom[\delta,\epsilon] = (\Omega_{\delta,\epsilon})^d$ with 
$\Omega_{\delta,\epsilon} := \Omega_\delta \cap B_{0,\epsilon} \equiv \Set{u\in \complex}{0<|u|<\epsilon\text{ and }|\arg(u)|<\frac\pi2+\delta}$.

\begin{lemma}[Local version of \Cref{prop:rational D-analytic}]\label{lem:rational D-analytic}
If a polynomial $H\in \complex[\bm u]$ has a zero at $\bm 0$, but no zero on $\odom[\delta,\epsilon]$ for some $\delta,\epsilon>0$, then $H(\bm u)$ is divisible by $u_j$ for some $1\le j\le d$.
\end{lemma}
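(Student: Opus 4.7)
The plan is to prove the contrapositive: assume $H \in \complex[\bm u]$ is not divisible by any $u_j$ and $H(\bm 0) = 0$; I will exhibit a zero of $H$ in $\odom[\delta,\epsilon]$ for every $\delta,\epsilon > 0$. A short reduction allows one to assume $H$ irreducible: in the factorisation $H = H_1 \cdots H_r$ into irreducibles, some $H_i$ vanishes at $\bm 0$, and no $H_i$ can be a scalar multiple of some $u_j$ (otherwise $u_j \mid H$), so $H_i$ still satisfies the hypotheses. For such an irreducible $H$, the zero set $V(H) := \{\bm u : H(\bm u) = 0\}$ is an irreducible algebraic hypersurface of dimension $d-1$ containing $\bm 0$. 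Since $H$ is not proportional to any $u_j$, we have $V(H) \neq \{u_j = 0\}$, and two irreducible varieties of the same dimension with one containing the other must coincide; hence $V(H) \not\subseteq \{u_j = 0\}$ for every~$j$.

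Next I would study $V(H)$ locally at $\bm 0$ via a Puiseux-type parametrisation. By the Jung--Abhyankar theorem (or iterated Weierstrass preparation) applied to an irreducible analytic germ of $V(H)$ at $\bm 0$, one obtains a holomorphic map $\gamma\colon U \to V(H)$ from a neighbourhood $U$ of $\bm 0$ in $\complex^{d-1}$, with $\gamma(\bm 0)=\bm 0$, parametrising the germ (up to finite ramification). Because the germ is not contained in any coordinate hyperplane, each $\gamma_j$ is a non-zero holomorphic function of $\bm t$. Setting $\bm t = (\rho_1 e^{i\theta_1},\ldots,\rho_{d-1} e^{i\theta_{d-1}})$ with small $\rho_k>0$, the condition $\gamma(\bm t) \in \odom[\delta,\epsilon]$ reduces to the angular constraints $\theta_k \in (-\pi/2-\delta,\pi/2+\delta)$ for $k \le d-1$ together with $\arg\gamma_j(\bm t) \in (-\pi/2-\delta,\pi/2+\delta) \pmod{2\pi}$ for all $j \le d$.

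The hard part is ensuring that suitable angles $\bm\theta$ always exist. A single Puiseux curve (the case $d-1=1$) does not always give enough rotational freedom: consider the line $(t, \zeta t, \zeta^2 t) \subset V(u_1 + u_2 + u_3)$ with $\zeta = e^{-2\pi i/3}$, for which no rotation of $t$ places all three coordinate arguments simultaneously in $(-\pi/2-\delta,\pi/2+\delta)$ when $\delta$ is small. For $d\ge 2$, however, the full germ of $V(H)$ at $\bm 0$ has $(d-1)$ complex dimensions of Puiseux directions, giving a $(d-1)$-dimensional real parameter $\bm\theta$ with which to satisfy the $d$ angular constraints. The idea is that while the first $d-1$ constraints cut out an open box, the remaining constraint on $\arg\gamma_d$ is continuous in $\bm\theta$ and, because the local component is not contained in $\{u_d = 0\}$, takes a non-constant range of values over that box; the main technical step is to show that this range meets the arc $(-\pi/2-\delta,\pi/2+\delta) \pmod{2\pi}$. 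In degenerate configurations where a single germ fails (as in the example above), one must additionally vary the irreducible germ, exploiting the different leading exponents and coefficients arising from the local decomposition or from the Newton polytope of $H$ at $\bm 0$.
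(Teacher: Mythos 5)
Your proposal sets up the right contrapositive and the reduction to an irreducible factor is fine, but it does not prove the lemma: the decisive step is missing. After parametrising a local branch of $V(H)$ you must produce a parameter value at which \emph{all} $d$ coordinates lie simultaneously in the sector $\Omega_{\delta,\epsilon}$, and you explicitly leave this open (you call it ``the main technical step''), you exhibit yourself a configuration --- the line $(t,\zeta t,\zeta^2 t)\subset V(u_1+u_2+u_3)$ with $\zeta=e^{-2\pi i/3}$ --- where the single-germ strategy fails, and you only gesture at ``varying the germ'' or using the Newton polytope without carrying anything out. That simultaneous-angle statement is precisely the hard content of the lemma, so as written the argument is a plan rather than a proof. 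There is also a technical problem with the parametrisation itself: a general hypersurface germ at $\bm 0$ is not quasi-ordinary, so the Jung--Abhyankar theorem does not give a Puiseux-type parametrisation of the germ by a polydisc in $\complex^{d-1}$; only for $d=2$ does Newton--Puiseux provide this for free.

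For comparison, the paper confines the geometric reasoning to $d=2$, where it can be closed: a Puiseux branch $\varphi(u)\sim c\,u^{\alpha}$ with (after possibly swapping the two variables) $\alpha\ge 1$ maps the sector of opening $\pi+2\delta$ onto a region containing a sector of opening $\alpha(\pi+2\delta)$ at $0$; since the two openings sum to more than $2\pi$, the image meets $\Omega_{\delta,\epsilon}$, giving a zero of $H$ in $(\Omega_{\delta,\epsilon})^2$. For $d\ge 3$ the paper avoids the multidimensional angular problem you ran into altogether: it restricts $H$ to the two-dimensional slices $(u,vx_1,\ldots,vx_{d-1})$ with $\bm x\in\real_{>0}^{d-1}$, applies the bivariate case on each slice, and runs an induction on the coefficients of $H$ showing that a counterexample would force $H\equiv 0$, a contradiction. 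To salvage your approach you would need a genuinely new argument for meeting the $d$ angular constraints at once; otherwise a slicing reduction of the paper's kind is the natural fix.
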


\begin{proof}
When $d=1$, the lemma is trivial: a univariate polynomial $H(u)$ has a zero at $0$ \Iff\ it is divisible by $u$.

When $d=2$, consider a polynomial $H\in \complex[u,v]$ \emph{not} divisible by $u$ nor $v$, such that $H(0,0)=0$. According to the Newton-Puiseux theorem (see e.g.\ \cite[Corollary~1.5.5, Theorem~1.7.2]{CasasAlvero2000}), there exist an integer $n\ge 1$ and a nonzero analytic function $s$ defined on a neighborhood of $0$, such that for every determination of the $n$-th root, the mapping $\varphi(u)=s(u^{1/n})$ satisfies $H(u,\varphi(u))=0$ for all $u$ in some neighborhood of $0$. Up to swapping the variables $u$ and $v$, we can assume \wlg\ that $\varphi(u) = O(u)$ as $u\to 0$. Since $s$ is analytic at $0$ and is not identically zero, there exist $c\ne 0$ and $\alpha\ge 1$ such that $\varphi(u)\sim c \cdot u^\alpha$ as $u\to 0$. Geometrically, this means that the mapping $\varphi$ multiplies the angles at $u=0$ by $\alpha$. In particular, for any $\delta,\epsilon>0$ with $\epsilon$ small enough, the image $\varphi(\Omega_{\delta,\epsilon})$ conains an angle of $\alpha(\pi+2\delta)$ at $0$. Since $\alpha\ge 1$, we have $\alpha(\pi+2\delta) + (\pi+2\delta) >2\pi$. This implies that $\varphi(\Omega_{\delta,\epsilon}) \cap \Omega_{\delta,\epsilon} \ne \varnothing$. In other words, the graph of the mapping $\varphi$ intersects $(\Omega_{\delta,\epsilon})^2$. It follows that the polynomial $H$ has zeros on $(\Omega_{\delta,\epsilon})^2$ for any $\delta,\epsilon>0$. By contraposition, this proves \Cref{lem:rational D-analytic} when $d=2$.

For $d\ge 3$, we give a proof by contradiction based on the result of the case $d=2$: Let $H\in \complex[\bm u]$ be a polynomial with a zero at $\bm 0$, no zero on $\odom[\delta,\epsilon]$ for some $\delta,\epsilon>0$, and such that $H(\bm u)$ is \emph{not} divisible by $u_j$ for any $1\le j\le d$. 
For $1\le j\le d$ and $m\ge 0$, define $E_m\0j = \set{\bm n \in \natural^d}{(n_1+\cdots+n_d)-n_j=m}$. Let us prove the following statement by induction on $m$:
\begin{equation}
\forall j\in \{1,\ldots,d\},\,
\forall \bm n\in E_m\0j,\quad
[\bm{u^n}] H(\bm u) = 0 \,.
\tag{$\mathcal H_m$}
\end{equation}

For $\bm x\in \real_{>0}^{d-1}$, let $H_{\bm x}(u,v)=H(u,v x_1,\ldots, v x_{d-1})$. One can check that for all $k\in \natural$, we have
\begin{equation}\label{eq:rational D-analytic proof bivariate coefficient}
[v^k] H_{\bm x}(u,v) 
= \sum_{\bm n\in E_k\01} [\bm{u^n}]H(\bm u) 
  \cdot u^{n_1} \cdot x_{1}^{n_2} \cdots x_{d-1}^{n_d} \,.
\end{equation}
Now fix some $m\ge 0$ and assume that $(\mathcal H_k)$ is true for all $0\le k<m$. By \eqref{eq:rational D-analytic proof bivariate coefficient}, we have $[v^k]H_{\bm x}(u,v) =0$ for all $k<m$. Hence $H_{\bm x}\0m(u,v) := v^{-m} H_{\bm x}(u,v)$ is a polynomial in $(u,v)$. Moreover, $H_{\bm x}\0m(0,0)=0$\,: For $m=0$, we have $H_{\bm x}\00(0,0) = H_{\bm x}(0,0)=H(\bm 0)=0$ by assumption. When $m\ge 1$, one can check that
\begin{equation}
\setb{\bm n\in E_m\01}{n_1=0} \ \subseteq \ 
\bigcup_{k<m} \mH({ \bigcup_{j=1}^d E_k\0j }\,.
\end{equation}
Hence the hypotheses $(\mathcal H_k)_{k<m}$ and \eqref{eq:rational D-analytic proof bivariate coefficient} imply that $H_{\bm x}\0m(0,0)\equiv [u^0 v^m]H_{\bm x}(u,v) = 0$ as well.
On the other hand, since $H(\bm u)\ne 0$ for all $\bm u\in \odom[\delta,\epsilon]$, we have $H_{\bm x}\0m(u,v) = v^{-m} H(u,vx_1,\ldots,vx_{d-1}) \ne 0$ for all $(u,v)\in (\Omega_{\delta,\tilde \epsilon})^2$, where $\tilde \epsilon = \min \m({ 1,x_1^{-1},\ldots,x_{d-1}^{-1} } \cdot \epsilon > 0$. 
So, according to the result of the case $d=2$, the polynomial $H_{\bm x}\0m(u,v)$ is either divisible by $u$ or divisible by $v$. 
In particular, we have
\begin{equation}
H_{\bm x}\0m(0,1) \cdot H_{\bm x}\0m(u,0) 
\equiv H(0,\bm x)\cdot [v^m] H_{\bm x}(u,v) = 0
\end{equation}
for all $\bm x\in \real_{>0}^{d-1}$. By analytic continuation, the above identity is valid for all $\bm x\in \complex^{d-1}$. Because $H(\bm u)$ is not divisible by $u_1$, the polynomial $\bm x\mapsto H(0,\bm x)$ is not identically zero. Since the ring of polynomials is an integral domain, this implies that $[v^m]H_{\bm x}(u,v)=0$ as a polynomial in $\bm x$. Comparing this to \eqref{eq:rational D-analytic proof bivariate coefficient}, we see that $[\bm{u^n}]H(\bm u)=0$ for all $\bm n\in E_m\01$. 
The same argument works for $E\0j_m$ for any $1\le j\le d$. Therefore $(\mathcal H_m)$ is true.

By induction, $(\mathcal H_m)$ holds for all $m\ge 0$. But this implies that $H(\bm u)\equiv 0$, which contradicts the assumption that $H(\bm u)$ is not divisible by $u_j$. This completes the proof of \Cref{lem:rational D-analytic} for general $d\ge 3$.
\end{proof}

\paragraph{Demi-analytic functions.}
A univariate function $A$ is demi-analytic at $\rho$ \Iff\ it is analytic in a neighborhood of $\rho$, and it is demi-entire \Iff\ it is an entire function. Thus the names ``demi-analytic'' and ``demi-entire''.

It is easy to check that a \thetahomo\ function is demi-entire \wrt\ a cone \Iff\ it is demi-analytic at $\bm 0$ \wrt\ the same cone.

Given the form of the asymptotic expansion \eqref{eq:m-transfer coeff expansion} and the analyticity of the scaling function $I(\bm \lambda)$, it is not hard to see that the coefficients $\an$ decays exponentially as $n_0\to \infty$ \Iff\ $I$ is identically zero. By \Cref{cor:scaling function properties}, this happens \Iff\ the homogeneous component $H(\bm u)$ is demi-analytic. In this sense, the demi-analyticity condition in \Crefp{1}{thm:m-transfer} is optimal.

\paragraph{Generalized homogeneous functions.}
If $H$ is a \thetahomo\ function, then for all $u_1>0$, we have
\note{If a \thetahomo\ $H$ is analytic on a cone $\bm K\subseteq \complex^d$ with non-empty interior, then by analytic continuation, the scaling relation \eqref{eq:def homogeneous} holds for any $\sigma$ close enough to the positive real axis. In particular, for $u_1$ close enough to the positive real axis, we have}
\begin{equation}
H(\bm u) = u_1^{\theta_0/\theta_1} \cdot H \m({ 1,\frac{u_2}{u_1^{\theta_2/\theta_1}\,}, \cdots, \frac{u_d}{\,u_1^{\theta_d/\theta_1}} } \,.
\end{equation}
Inversely, for any function $h(r_2,\ldots,r_d)$ of $d-1$ variables, $H(\bm u) = u^{\theta_0/\theta_1} \cdot h \mb({ u_1^{-\theta_2/\theta_1} u_2, \cdots, u_1^{-\theta_d/\theta_1} u_d }$ is a \thetahomo\ function satisfying $h(r_2,\ldots,r_d) = H(1,r_2,\ldots,r_d)$.
In particular, when $d=1$, the only $(\theta_0,\theta_1)$-homogeneous analytic functions are the power functions $H(u)=C\, u^{\theta_0/\theta_1}$ ($C\in \complex$).

For each $\bm \alpha \in \real^d$, the monomial $\bm u \mapsto \bm{u^\alpha}$ is \thetahomo\ for all $(\theta_0,\bm \theta)$ such that $\bm{\alpha \cdot \theta} = \theta_0$.
Clearly, a linear combination of finitely many such monomials is also \thetahomo, as long as their multi-exponents $\bm \alpha$ satisfy the above linear relation for the same \thetas.
Such polynomials are obviously \emph{not} all the homogeneous functions, but they provide a good intuition for how a non-homogeneous function could be decomposed into homogeneous components.

\paragraph{Functions of polynomial type.}
In \Cref{thm:m-transfer bis}, the remainder term in the asymptotic expansion of the generating function $A(\bm z)$ was expressed as a big-O of some \thetahomo\ function of polynomial type. The following lemma tells us that every such remainder term can also be bounded by finitely many \emph{monomials} with the same homogeneity.

\begin{lemma}\label{lem:homogeneous monomial bounds}
Let $\bm K$ be a cone in $\complex^d$.
If $H:\bm K\to \complex$ is a \thetahomo\ function of polynomial type locally at $\bm 0$, then there exist a constant $C$ and finitely many vectors $\bm \alpha\0k \in \real^d$, such that $\bm \alpha\0k \bm{\cdot \theta} = \theta_0$ for all $k$, and
\begin{equation}\label{eq:homogeneous monomial bounds}
\forall \bm u \in \bm K,\quad |H(\bm u)| \le C \cdot \sum_k \mb|{ \bm u^{\bm \alpha\0k} } \,.
\end{equation}
\end{lemma}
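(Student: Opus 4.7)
The strategy is to exploit homogeneity to transport the \emph{local} polynomial-type bound near $\bm 0$ into a \emph{global} monomial bound on all of $\bm K$. Let $\epsilon,C,M>0$ be as in the local bound \eqref{eq:def polynomial type near 0} for $H$, and fix any $\bm u\in \bm K$ with all components $u_j\ne 0$ (which is the only case worth treating). I would choose a scaling factor $\sigma=\sigma(\bm u)>0$ so that $\sigma^{\bm \theta}\bm u$ lands in the small region where the local bound applies, namely $|\sigma^{\theta_j} u_j|\le \epsilon$ for every $j$. The natural choice is $\sigma := \min_k (\epsilon/|u_k|)^{1/\theta_k}$, which makes $\max_k |\sigma^{\theta_k} u_k|=\epsilon$; let $k^*=k^*(\bm u)$ be an index achieving this minimum, so that $\sigma^{\theta_{k^*}}=\epsilon/|u_{k^*}|$.

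Combining the homogeneity relation $|H(\sigma^{\bm \theta}\bm u)|=\sigma^{\theta_0}|H(\bm u)|$ with the local bound evaluated at $\sigma^{\bm \theta}\bm u$ yields
\begin{equation*}
|H(\bm u)| \,\le\, C\sum_{j=1}^d \sigma^{-\theta_0-M\theta_j}\, |u_j|^{-M} \,,
\end{equation*}
and substituting $\sigma=(\epsilon/|u_{k^*}|)^{1/\theta_{k^*}}$ converts each $\sigma$-power into a power of $|u_{k^*}|$. This produces exponent vectors $\bm \alpha^{(j,k^*)}\in \real^d$, one for each pair $(j,k^*)$. A short direct calculation confirms that $\bm \alpha^{(j,k^*)}\!\cdot \bm \theta=\theta_0$ in both cases $j=k^*$ and $j\ne k^*$: the $M\theta_j$ contributions cancel out, which is exactly the homogeneity condition required on the exponents.

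The final step is to remove the $\bm u$-dependence of $k^*$: since every monomial $|\bm u^{\bm \alpha^{(j,k^*)}}|$ is nonnegative, I can enlarge the (implicitly $\bm u$-dependent) sum over $j$ to the full sum over all pairs $(j,k^*)\in \{1,\ldots,d\}^2$, which only weakens the bound and yields a uniform estimate with at most $d^2$ monomials. The constant absorbs the finite prefactors $\epsilon^{-(\theta_0+M\theta_j)/\theta_{k^*}}$. No serious obstacle arises: the argument is scaling plus exponent bookkeeping. The only subtlety worth flagging is that $\bm K$ must be closed under the nonuniform dilations $\bm u\mapsto \sigma^{\bm \theta}\bm u$ (so that $\sigma^{\bm \theta}\bm u\in \bm K$), but this closure is implicit in the very notion of $(\theta_0,\bm \theta)$-homogeneity on $\bm K$ and so can be taken for granted.
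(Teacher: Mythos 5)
Your proposal is correct and follows essentially the same route as the paper: use homogeneity to rescale $\bm u$ into the neighborhood of $\bm 0$ where the local polynomial-type bound applies, convert the scaling factor into powers of the binding coordinate, check that the resulting exponent vectors satisfy $\bm\alpha\cdot\bm\theta=\theta_0$, and sum over the finitely many choices of that coordinate to remove the $\bm u$-dependence. The only cosmetic difference is that the paper first reduces to $\bm\theta=\bm 1$ via $\mathcal H(\bm s)=H(\bm s^{\bm\theta})$ and rescales by the largest coordinate, whereas you work directly with the anisotropic dilation $\sigma^{\bm\theta}$; both amount to the same bookkeeping.
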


\begin{proof}
By considering $\mathcal H(\bm s) = H(\bm{s^\theta})$ instead of $H$, we can assume \wlg\ that $\bm \theta=\bm 1$.
By assumption, there exist constants $C,M,\epsilon>0$ such that on $\set{\bm u\in \bm K}{\forall 1\le j\le d,\,|u_j|<\epsilon}$, we have
\begin{equation}
|H(\bm u)| \le C \cdot \m({ |u_1|^{-M} + \cdots + |u_d|^{-M}  } \,.
\end{equation}
Let $i$ be such that $|u_i| = \max_{1\le j\le d} |u_j|$. Then we can rescale the vector $\bm u$ by $\frac{\epsilon}{|u_i|}$ to place it in the above set. Since $H$ is $(\theta_0,\bm 1)$-homogeneous, we have $H(\bm u) = \epsilon^{-\theta_0} |u_i|^{\theta_0} H\m({ \epsilon\frac{u_1}{|u_i|},\,\ldots\, ,\epsilon \frac{u_d}{|u_i|}}$, and therefore
\begin{align*}
\abs{H(\bm u)} &\le
\epsilon^{-\theta_0} |u_i|^{\theta_0}
 C \cdot \mB({ \epsilon^{-M} \mB|{\frac{u_1}{u_i}}^{-M} + \cdots + \epsilon^{-M} \mB|{\frac{u_d}{u_i}}^{-M} }
\\ &=
C \epsilon^{-\theta_0-M} \m({ \mb|{u_i^{\theta_0+M} u_2^{-M}} + \cdots + \mb|{u_i^{\theta_0+M} u_d^{-M}}}
\end{align*}
Summing the \rhs\ over $i$ gives a bound of $H(\bm u)$ of the form \eqref{eq:homogeneous monomial bounds} for all $\bm u \in \bm K$.
\end{proof}

\Cref{lem:homogeneous monomial bounds} was briefly used in the proof of \Cref{thm:m-transfer bis} to obtain the bound \eqref{eq:main proof H(u) bound}. It implies \eqref{eq:main proof H(u) bound} because the variable $\bm u\in \vv$ in \eqref{eq:main proof H(u) bound} satisfies $|u_j|\ge 1$ for all $1\le j\le d$.

In practical examples, it is usually not hard to check that a function is of polynomial type, in particular thanks to the  following lemma.

\begin{lemma}[Closure properties of $\poly(\kdom)$ and $\poly(\ddom)$]\label{lem:poly-type closure}
For any $\delta\in (0,\pi)$, the space $\poly(\kdom)$ forms a $\complex$-algebra \wrt\ pointwise addition and multiplication of functions.\\
Moreover, if $f$ is any function such that $|f(x)|$ is bounded by a polynomial of $|x|$ (e.g.\ $f(x)=x^\beta$, $\beta\in \complex$), then for all $h\in \poly(\kdom)$ such that $f\circ h$ is well-defined and analytic on $\kdom$, we have $f\circ h \in \poly(\kdom)$.

The same is true for $\poly(\ddom)$ with $\delta\in (0,\pi/2)$.
\end{lemma}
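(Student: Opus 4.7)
The plan is to reduce every closure property to an elementary estimate on Laurent monomials. The only subtle point is that products and compositions naturally create \emph{mixed} monomials $\prod_j |u_j|^{a_j}$, whereas the defining bound \eqref{eq:def polynomial type 2-ended} only allows \emph{single-variable} Laurent monomials $|u_j|^{\pm M}$. The key technical tool I would set up first is the elementary reduction
\begin{equation*}
\textstyle \prod_{j=1}^d x_j \;\le\; \max_j x_j^d \;\le\; \sum_{j=1}^d x_j^d
\qquad (x_1,\ldots,x_d \ge 0),
\end{equation*}
combined with the trivial inequality $|u|^a \le |u|^{|a|} + |u|^{-|a|}$. Together they allow any mixed Laurent monomial $\prod_j |u_j|^{a_j}$ to be bounded by a sum of single-variable monomials $|u_j|^{\pm M'}$ with $M' = d\max_j |a_j|$, which is precisely the form required by \eqref{eq:def polynomial type 2-ended}.

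For the $\complex$-algebra statement, closure of $\poly(\kdom)$ under addition and scalar multiplication is immediate, since the bound in \eqref{eq:def polynomial type 2-ended} is stable under sums and constants lie in the class. For closure under multiplication, I would multiply the two polynomial-type bounds for $f$ and $g$ together, obtaining a finite sum of two-variable cross terms $|u_i|^{\pm M_f}|u_j|^{\pm M_g}$; for $i=j$ these are already single-variable, and for $i \ne j$ the mixed-monomial reduction above converts each one into a valid single-variable bound with exponent $\le d\max(M_f,M_g)$.

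For the composition statement I would start from $|f(h(\bm u))| \le A(1+|h(\bm u)|)^N$ (the polynomial growth hypothesis on $f$) and substitute the polynomial-type bound $|h(\bm u)| \le C\sum_j(|u_j|^M + |u_j|^{-M})$. Expanding $(1 + C\sum_j(|u_j|^M + |u_j|^{-M}))^N$ by the multinomial theorem produces a nonnegative finite sum of mixed Laurent monomials of total $|u_j|^{\pm M}$-degree at most $N$; applying the mixed-to-single reduction then yields a single-variable polynomial-type bound with exponent at most $dMN$. The hypothesis that $f\circ h$ is analytic on $\kdom$ supplies the remaining analyticity requirement.

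For $\poly(\ddom)$ the same arguments go through, only simpler: the defining bound \eqref{eq:def polynomial type near 1} contains only negative powers $|z_j-1|^{-M}$, and the fact that $\ddom$ is bounded means the positive-power side of the reduction is never needed. I expect the main ``obstacle'' to be purely bookkeeping of exponent growth ($M \to dM$ under products, $dM \to dMN$ under composition); no conceptual difficulty arises beyond the mixed-monomial reduction highlighted above.
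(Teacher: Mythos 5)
Your proposal is correct and follows essentially the same route as the paper: the paper handles the product case with the inequality $ab\le\frac12(a^2+b^2)$ to turn cross terms $|u_i|^{\pm M}|u_j|^{\pm M}$ into single-variable monomials, and handles composition by writing $|f(x)|\le c(1+|x|^m)$ and invoking the already-proved multiplicative closure for $h^m$, which is the same elementary exponent bookkeeping as your mixed-monomial reduction and multinomial expansion. The only (inessential) difference is that you expand $(1+|h|)^N$ directly rather than reusing the algebra closure, and your product-to-sum inequality replaces the paper's AM--GM step.
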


\begin{proof}
Fix $\delta\in (0,\pi)$ and let $g,h\in \poly(\kdom)$. By definition there exist $C,M>0$ such that
\begin{align}
\abs{ g(\bm u) } &\le C\cdot \m({ |u_1|^M + |u_1|^{-M} + \cdots + |u_d|^M + |u_d|^{-M} } \\
\tq{and}
\abs{ h(\bm u) } &\le C\cdot \m({ |u_1|^M + |u_1|^{-M} + \cdots + |u_d|^M + |u_d|^{-M} }
\end{align}
for all $\bm u \in \kdom$.
It is clear that any linear combination of $g$ and $h$ satisfies a bound of the same form. So $\poly(\kdom)$ is a $\complex$-vector space. In addition, since $ab\le \frac12(a^2+b^2)$ for all $a,b\in \real$, we have
\begin{align*}
|g(\bm u)h(\bm u)|
& \le C^2 \sum_{i,j=1}^d \mB({
  |u_i|^M |u_j|^M
+ |u_i|^M |u_j|^{-M}
+ |u_i|^{-M} |u_j|^M
+ |u_i|^{-M} |u_j|^{-M}
} \\
& \le C^2 \sum_{i,j=1}^d \mB({
  |u_i|^{2M} + |u_j|^{2M}
+ |u_i|^{-2M}+ |u_j|^{-2M}
} \,.
\end{align*}
So $\poly(\kdom)$ is also closed under multiplication, and therefore forms a $\complex$-algebra.

Now let $f$ be a function such that $|f(x)|$ is bounded by a polynomial of $|x|$. Then there exist $c>0$ and $m\in \integer_{\ge 0}$ such that $|f(x)|\le c\cdot (1+|x|^m)$. If $h\in \poly(\kdom)$ and $f\circ h$ is well-defined and analytic on~$\kdom$, then we have $h^m\in \poly(\kdom)$ by the closure of $\poly(\kdom)$ under multiplication, and therefore
\begin{equation}
\abs{ f\circ h(\bm u) } \le c \cdot (1+|h(\bm u)^m|)
\le c \cdot \mB({ 1 + C\cdot \m({ |u_1|^M + |u_1|^{-M} + \cdots + |u_d|^M + |u_d|^{-M} } }
\end{equation}
for some $C,M>0$. It follows that $f\circ h\in \poly(\kdom)$.
The same proof works for $\poly(\ddom)$.
\end{proof}

Let $\bm \Omega \subseteq \complex^d$. A function $A:\bm \Omega \to \complex$ is \emph{algebraic} if there exists a polynomial $E\in \complex[a,\bm z]$, such that $E(A(\bm z),\bm z)=0$ for all $\bm z\in \bm \Omega$.

\begin{lemma}[Algebraic functions are of polynomial type]\label{lem:poly-type algebraic}
If $A$ is an algebraic function analytic on $\ddom$, then $A\in \poly(\ddom[\delta'])$ for all $\delta' \in (0,\delta)$.
\end{lemma}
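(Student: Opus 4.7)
The plan is to combine the algebraic equation for $A$ with Łojasiewicz's inequality and the sub-mean-value property for plurisubharmonic functions. The first step is to derive a clean algebraic bound for $A$. Passing to the minimal polynomial of $A$ over $\complex(\bm z)$, assume $A$ satisfies an irreducible equation $E(a,\bm z) = \sum_{k=0}^N p_k(\bm z)\, a^k = 0$ with $p_N \not\equiv 0$. Setting $\tilde A := p_N A$ and multiplying $E(A, \bm z) = 0$ by $p_N^{N-1}$ produces the monic equation
$$ \tilde A^N + \sum_{k=0}^{N-1} p_k(\bm z)\, p_N(\bm z)^{N-1-k}\, \tilde A^k = 0. $$
The standard root bound for monic polynomials then gives $|\tilde A(\bm z)| \le 1 + N \max_{k<N} |p_k(\bm z)\, p_N(\bm z)^{N-1-k}|$, which is bounded by some constant $C_0$ on the compact set $\overline{\ddom[\delta']}$. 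Hence $|A(\bm z)| \le C_0 / |p_N(\bm z)|$ on $\ddom[\delta'] \setminus V$, where $V := \{p_N = 0\}$ is an algebraic hypersurface. Applying Łojasiewicz's inequality for $p_N$ on $\overline{\ddom[\delta']}$ then yields $|A(\bm z)| \le C_1 \cdot \mathrm{dist}(\bm z, V)^{-m_0}$ for some $C_1, m_0 > 0$.

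The next step would be to smooth out the ``fake'' singularity on $V \cap \ddom$ by leveraging the analyticity of $A$ on all of $\ddom$. Since $\log|A|$ is plurisubharmonic, so is $|A|^p$ for every $p > 0$. I would choose $p$ small enough that $p\cdot m_0 < 2$, which makes $\mathrm{dist}(\cdot, V)^{-p m_0}$ locally integrable (exploiting the fact that $V$ has real codimension two in $\complex^d$). For each $\bm z_0 \in \ddom[\delta']$, I would fit an isotropic polydisc $D(\bm z_0, r) := \prod_j B_r(z_{0,j}) \subset \ddom$ with common radius $r := c \min_j \min(|z_{0,j} - 1|, 1)$, where $c = c(\delta,\delta') > 0$ is chosen using the angular gap between $\dom[\delta']$ and $\dom$ so that the polydisc indeed fits inside $\ddom$. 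The sub-mean-value property combined with the algebraic bound of the first step gives
$$ |A(\bm z_0)|^p \le \frac{1}{|D(\bm z_0, r)|} \int_{D(\bm z_0, r)} |A(\bm w)|^p\, \mathrm{d}\bm w \le \frac{C_2}{|D(\bm z_0, r)|} \int_{D(\bm z_0, r)} \mathrm{dist}(\bm w, V)^{-p m_0}\, \mathrm{d}\bm w. $$

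A scaling argument, treating separately the cases $\mathrm{dist}(\bm z_0, V) \le 2r$ and $\mathrm{dist}(\bm z_0, V) > 2r$, would bound the averaged integral by $C_3 \cdot r^{-p m_0}$ uniformly in $\bm z_0 \in \ddom[\delta']$. Taking the $p$-th root then yields
$$ |A(\bm z_0)| \le C_4 \cdot r^{-m_0} \le C_5 \cdot \bigl(\min_j |z_{0,j} - 1|\bigr)^{-m_0} \le C_5 \sum_{j=1}^d |z_{0,j} - 1|^{-m_0}, $$
which is exactly the polynomial-type bound defining $\poly(\ddom[\delta'])$.

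The main obstacle will be the uniform scaling estimate for the averaged integral, which requires a careful local analysis of $V$ near its intersection with $\overline{\ddom[\delta']}$ (smooth points versus singular points of $V$). The conceptual reason the overall argument works is that the complex codimension-one (real codimension-two) nature of $V$ makes $|A|^p$ locally integrable for small $p$, while the analyticity---not merely continuity---of $A$ on all of $\ddom$ lets the sub-mean-value inequality replace the spurious algebraic blow-up on $V \cap \ddom$ with the genuine polynomial-type growth of $A$ as $\bm z \to \{z_j = 1\}$.
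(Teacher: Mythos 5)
Your route is genuinely different from the paper's. The paper never touches a defining polynomial of $A$: it notes that $\ddom$ is a semi-algebraic subset of $\real^{2d}$ and that a continuous algebraic function on an open semi-algebraic set is semi-algebraic, writes a first-order formula for $g(t)=\sup\{|A(\bm z)| : \mathrm{dist}(\bm z,\complex^d\setminus\ddom)=1/t\}$, and invokes the growth theorem for univariate semi-algebraic functions (Bochnak--Coste--Roy, Prop.~2.6.1) to get $|A(\bm z)|\le C\cdot\mathrm{dist}(\bm z,\complex^d\setminus\ddom)^{-M}$, which is then converted into the bound by $\min_j|z_j-1|$ on $\ddom[\delta']$. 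You instead exploit the algebraic equation directly: the monic trick plus the classical \L{}ojasiewicz inequality for the leading coefficient $p_N$ gives $|A|\le C_1\,\mathrm{dist}(\cdot,V)^{-m_0}$ off $V=\{p_N=0\}$, and plurisubharmonic averaging of $|A|^p$ over polydiscs of radius comparable to $\min_j|z_{0,j}-1|$ erases the spurious blow-up along $V\cap\ddom$, where $A$ is in fact analytic. This is a sound plan; it buys an explicit exponent tied to $p_N$ and stays within classical several-complex-variables tools, whereas the paper's argument buys brevity and robustness (no analysis near $V$ at all, and it applies verbatim to any continuous semi-algebraic function on a semi-algebraic domain, not just to solutions of a polynomial equation).

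Two points are needed to make your sketch airtight. First, a small repair: your polydiscs centred at $\bm z_0\in\ddom[\delta']$ are only guaranteed to lie in $\ddom$, so the \L{}ojasiewicz-type bound must be proved on $\overline{\ddom[\delta'']}$ for some $\delta''\in(\delta',\delta)$ (same argument, since that closure is still compact), with the constant $c$ in the radius chosen so that the polydiscs stay inside $\ddom[\delta'']$. Second, the uniform averaged-integral estimate you flag as the main obstacle is true and does not require a smooth-versus-singular stratification of $V$: it follows from the tube-volume bound for real algebraic sets of codimension two, namely that the volume of $\{\mathrm{dist}(\cdot,V)\le t\}$ intersected with a ball of radius $r$ is at most $C(d,\deg p_N)\,t^2r^{2d-2}$ for $t\le r$, combined with a dyadic layer-cake decomposition; this gives the bound $C_3 r^{-pm_0}$ uniformly in the centre. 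Alternatively you can bypass $\mathrm{dist}(\cdot,V)$ altogether: after a generic linear change of variables making $p_N$ monic of degree $k$ in $z_d$, write $|p_N(\bm w)|=\prod_{i\le k}|w_d-a_i(\bm w')|$, use $|A|^p\le C|p_N|^{-p}\le \frac{C}{k}\sum_i|w_d-a_i(\bm w')|^{-pk}$ with $pk<2$, and integrate slice-wise in $w_d$; each slice integral is at most $\frac{2\pi}{2-pk}r^{2-pk}$ regardless of where the roots sit, which yields the required average bound $Cr^{-pk}$ uniformly and produces the explicit final exponent $k=\deg p_N$.
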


\begin{proof}
Let $A$ be an algebraic function which is analytic on $\ddom$. For $\bm z\in \ddom$, let 
\begin{equation}
h(\bm z) 
= \frac1{\mathrm{dist}(\bm z, \complex^d\setminus \ddom)} 
\equiv \m({ \inf_{\bm w \in \complex^d\setminus \ddom} \norm{\bm z-\bm w} }^{-1},
\end{equation} 
where $\norm{\bm z-\bm w}$ is the Euclidean distance between $\bm z$ and $\bm w$, viewed as points in $\real^{2d}$. Let us show that there exist $C,M>0$ such that
\begin{equation}\label{eq:poly-type algebraic proof}
\forall \bm z \in \ddom,\quad
\mb|{ A(\bm z) } \le C\cdot h(\bm z)^M\,.
\end{equation}

\newcommand{\Graph}{\mathtt{Graph}}
\newcommand{\et}{\text{ and }}
\newcommand{\ou}{\text{ or }}
This is a variant of the \emph{\L{}ojasiewicz inequality} in semi-algebraic geometry. See e.g.\ \cite[Chapter~2]{BochnakCosteRoy1998} for an introduction.
Recall that a set $S\subseteq \real^n$ is semi-algebraic \Iff\ it can be defined by a first order logic formula involving only polynomial conditions (i.e.\ equations or inequalities) and quantifiers $\forall/\exists$ over~$\real$. And a function $f:S\to \real^m$ is semi-algebraic \Iff\ $\Graph(f) := \set{(\bm x,f(\bm x)) \in \real^{n+m}}{\bm x\in S}$ is a semi-algebraic set.

It is not hard to write a first order formula that describes the set $\ddom$ (viewed as a subset of $\real^{2d}$). Therefore $\ddom$ is a semi-algebraic set. 
It is well-known that a continuous algebraic function defined on an open semi-algebraic set is always semi-algebraic (see e.g.\ \cite[Theorem~11]{Wakabayashi2008}). Hence the function $A:\ddom \to \complex \equiv \real^2$ is semi-algebraic.
Thanks to general closure properties of the class of semi-algebraic functions (c.f.~\cite[Section~2.2]{BochnakCosteRoy1998}), we deduce from the above facts that the functions $h:\ddom\to \real$ and $|A|:\ddom\to \real$ are also semi-algebraic.

For each $t>0$, let $G_t=\Set{\bm z\in \ddom}{h(\bm z)=t}$ and $g(t) = \sup_{\bm z \in G_t} |A(\bm z)|$, with the convention that $\sup \varnothing=0$. The graph of the function $g:\real_{>0}\to \real$ can be described by a first order formula as follows:
\begin{align*}
\Graph(g) = \Big\{\,(t,y)\in \real^2 \ \Big|\ 
t>0 \et& \m({ \forall \bm z\in G_t,\, |A(\bm z)|\le y } \\
    \et&  \mb({ (G_t=\varnothing \et y=0) 
           \ou (\exists \bm z\in G_t,\, |A(\bm z)|=y) } 
\,\Big\}
\end{align*}
where 
\begin{align*}
\mB({ \forall \bm z\in G_t,\, |A(\bm z)|\le y }
\ \iff\ 
\Big(\, \forall \bm z\in \real^{2d},\, &
        \mb({ \bm z\in \ddom \et (\bm z,t) \in \Graph(h) } \\
        \Rightarrow &
        \mb({ \exists w\in \real,\, (\bm z,w)\in \Graph(|A|) 
                                    \et w\le y } 
\Big) 
\,, \\
\mB({ G_t=\varnothing \et y=0 } \iff\
\Big(\, \forall \bm z \in \real^{2d},\, &
        \mb({ \bm z\in \ddom 
              \Rightarrow (\bm z,t)\not\in \Graph(h) }
\Big) 
\,, \\
\mB({ \exists \bm z\in G_t,\, |A(\bm z)|=y } \iff\
\Big(\, \exists \bm z \in \real^{2d},\, &
        \mb({ \bm z\in \ddom \et (\bm z,t)\in \Graph(h) 
                             \et (\bm z,y)\in \Graph(|A|) }
\Big) \,.
\end{align*}
Since $h$ and $|A|$ are semi-algebraic functions, the conditions $(\bm z,t)\in \Graph(h)$ and $(\bm z,y)\in \Graph(|A|)$ can be further expanded into first order logic formulas involving only polynomial conditions and quantifiers over $\real$. It follows that $g$ is a semi-algebraic
function. A classical result on the growth rate of univariate semi-algebraic functions \cite[Proposition~2.6.1]{BochnakCosteRoy1998} states that there exist constants $C,M,t_0>0$ such that $g(t)\le C\cdot t^M$ for all $t>t_0$. The definition of $g$ implies that it is bounded on $(0,t_0]$. Hence the bound $g(t)\le C\cdot t^M$ extends to all $t>0$. This proves \eqref{eq:poly-type algebraic proof}.

Finally, notice that for all $\delta'\in (0,\delta)$, there exists a constant $c>0$ such that
\begin{equation}
\forall \bm z\in \ddom[\delta'] ,\quad
c^{-1} \cdot \mathrm{dist}(\bm z,\complex^d\setminus \ddom)
\ \le\ \min_{1\le j\le d} |z_j-1| 
\ \le\ c\cdot \mathrm{dist}(\bm z,\complex^d\setminus \ddom)\,.
\end{equation}
Moreover, we have
\begin{equation}
               \m({ \min_{1\le j\le d} |z_j| }^{-M} \!
\le\ |z_1-1|^{-M} + \cdots + |z_d-1|^{-M}
\,\le\, d \cdot \m({ \min_{1\le j\le d} |z_j| }^{-M}.
\end{equation}
It follows that $|A(\bm z)|\le \tilde C \sum_{j=1}^d |z_j-1|^{-M}$ on $\ddom[\delta']$ for some $\tilde C,M>0$. That is, $A\in \poly(\ddom[\delta'])$.
\end{proof}

\paragraph{Background on ACSV and its general strategy.}
The results in paper fall under the topic of \emph{analytic combinatorics in several variables} (ACSV). The remaining paragraphs provide some background on ACSV and where this work stands relative to the others.

In general, analytic combinatorics aims at understanding the enumerative properties of large combinatorial structures through the analytic properties of their generating functions. This is usually done in two steps: First, some (possibly implicit) expression of the generating function must be derived from the definition of the combinatorial structure that it encodes. Then, one studies the generating function as an complex analytic function to derive asymptotic formulas of its coefficients. 

Compared to analytic combinatorics in one variable, to which \mbox{the transfer theorems~\ref{thm:u-transfer}~and~\ref{thm:u-transfer bis} belong}, analytic combinatorics in several variables is a much less mature theory. 
The difference is especially stark when it comes to derving coefficient asymptotics from the generating functions (i.e. the second step outlined above). This process is often known as \emph{singularity analysis}, since the asymptotic expansion of the coefficients of a generating function is mostly determined by the properties of the function near its singularities.
For instance, if a function $A(z)$ has a unique dominant singularity at $\rho \in \complex^*$, then thanks to the analyticity of $A$ everywhere else inside and on the circle of radius $|\rho|$, we can deform the contour of integration in the Cauchy integral formula \eqref{eq:Cauchy integral formula} to a curve $\mathcal C$ that coincides with a circle of radius $r>|\rho|$ everywhere except in a neighborhood $\mathcal N$ of $\rho$. 
By splitting the parts of the contour inside and outside $\mathcal N$, we get $[z^n]A(z) = I\1{loc} + I\1{rem}$ with
\begin{equation}
I\1{loc} = \frac{1}{2\pi i}\int_{\mathcal C \,\cap\, \mathcal N} \frac{A(z)}{z^{n+1}}\dd z
\qtq{and}
I\1{rem} = \frac{1}{2\pi i}\int_{\mathcal C \,\setminus\, \mathcal N} \frac{A(z)}{z^{n+1}}\dd z \,.
\end{equation}
Since $|z|=r$ on $\mathcal{C\setminus N}$, we have $I\1{rem} = O(r^{-n})$, which is exponentially small compared to $|\rho|^{-n}$. On the other hand, $\limsup_{n\to \infty} ([z^n]A(z) )^{1/n} = |\rho|^{-1}$ by the root test of radius of convergence. Hence the asymptotics of $[z^n]A(z)$ is dominated by the term $I\1{loc}$.
Since $I\1{loc}$ only depends on $A(z)$ in an arbitrarily small neighborhood of $\rho$, its asymptotics can be further studied via asymptotic expansions of the function $A(z)$ near its singularity $z=\rho$. (This is basically the beginning of a proof of Theorem~\ref{thm:u-transfer bis}.)

At first glance, it is not clear how the above approach of singularity analysis could be generalized to multivariate functions. Indeed, the singularities of a multivariate complex function $A(\bm z)$ always form a continuous set with no isolated points (c.f.\ Hartog's extension theorem). 
The crucial remark here is that in general, not all singularities of $A(\bm z)$ contribute to the dominant asymptotics of its coeffcients. In nice cases, one can even expect to find a finite number of \emph{contributing singularities}, so that the asymptotics of $[\bm{z^n}]A(\bm z)$ is dominanted by the values of $A(\bm z)$ in arbitrarily small neighborhoods of these points. In practice, one would like deform the torus $\bm T_r$ in the Cauchy integral formula \eqref{eq:Cauchy integral formula} to a cycle (i.e.\ $d$-chain without boundary) $\cc$ homologous to $\bm T_r$ in the domain of analyticity of the integrant $\frac{A(\bm z)}{\bm z^{\bm n+\bm 1}}$, such that the denominator $\abs{\bm z^{\bm n+\bm 1}}$ attains its mimimum only at a finite number of points $\bm \rho\01, \ldots, \bm \rho\0m$ on $\bm C$. By Stokes' theorem (see e.g.~\cite[Appendix~A.2]{PemantleWilson2013}), such a deformation does not change the value of the integral. One can then take arbitrarily small neighborhoods $\bm{\mathcal N}\01,\ldots \bm{\mathcal N}\0m$ of the points $\bm \rho\01, \ldots, \bm \rho\0m$, and split the integral inside and outside these neighborhoods as in the univariate setting. This gives the decomposition
$[\bm{z^n}]A(\bm z) = I\1{loc}\01 + \cdots + I\1{loc}\0m + I\1{rem}$, where
\begin{equation}
I\1{loc}\0s = \m({ \frac1{2\pi i} }^d \!\int_{\cc\0s} \frac{A(\bm z)}{\bm z^{\bm n + \bm 1}} \dd \bm z
\qtq{and}
I\1{rem} = \m({ \frac1{2\pi i} }^d \!\int_{\bm C\setminus \m({ \cc\01 \cup \cdots \cup \cc\0m }} \frac{A(\bm z)}{\bm z^{\bm n + \bm 1}} \dd \bm z \,,
\end{equation}
with $\cc\0s := \cc \cap \bm{\mathcal N}\0s$.
For a well chosen cycle $\cc$, we expect the \emph{non-local} term $I\1{rem}$ to be exponentially small compared to the \emph{local} terms $I\1{loc}\0s$, so that the latter dominates the the asymptotics of $[\bm{z^n}]A(\bm z)$. After this, one still needs to expand the localized integrals $I\1{loc}\0s$ to obtain a simple asymptotic expansion of $[\bm{z^n}]A(\bm z)$. How this can be done depends on the form of the function $A(\bm z)$. But usually this is a simpler problem, since there are a lot more tools at our disposal for the local analysis of a function.

The above general strategy to the multivariate singularity analysis has been outlined by Pemantle and Wilson in \cite{PemantleWilson2013}. Over the past twenty years, they and their collaborators developped an impressive theory that treats the singularity analysis of multivariate rational-type functions in an algorithmic way. The results of their project are collected at \url{http://acsvproject.com/}. More precisely, they consider general rational functions in several variables (and also some other functions whose singularities form algebraic varieties) and compute their coefficient asymptotics in the diagonal limit (i.e.~$\bm \theta$-diagonal limit with $\bm \theta=(1,\ldots,1)$). In this broad setting, they carry out the strategy described in the previous paragraph in a systematic way, identifying the contributing singularities $\bm \rho\0s$ and simplifying the localized integrals $I\1{loc}\0s$, using powerful tools from algebraic topology and Morse theory.
The reason that they need such advanced tools is that the singular set of a general rational function can have a quite complicated geometry. In particular, the location of the dominant singularities $\bm \rho\01,\ldots,\bm \rho\0m$ (or \emph{contributing critical points}, as they are called in \cite{PemantleWilson2013}) depends on the direction $\bm \lambda$ of the diagonal limit. And in general, they cannot be reached by the easy-to-visualize cycles of product form $\cc= \cj[1]\times \cdots \times \cj[d]$.

This paper explores the case of $\Delta$-analytic generating functions, following the same general strategy of singularity analysis. As shown by \Cref{prop:rational D-analytic}, this case is essentially disjoint from that of rational functions. \mbox{$\Delta$-analytic} functions have a simpler singularity structure, in the sense that they always have a unique, fixed dominant singularity reachable by a cycle of product form. This allows us to study their coefficients in the $\bm \theta$-diagonal limits for  general $\bm \theta \in \real_{>0}^d$, while using mostly elementary tools from univariate complex analysis. We will discuss more the relation between our case and the case of rational functions in \Cref{sec:discussions}. For more background on ACSV, we refer to the historical accounts in \cite[Chapter~1]{PemantleWilson2013} and \cite[Section~1.2.2]{Melczer2021}.

\bibliographystyle{abbrv}
\bibliography{/Users/linxiao/tex-lib/bibdata-2022-01-05}
\Addresses

\end{document}